\newtheorem{thm}{Theorem}
\newtheorem{lem}{Lemma}
\newtheorem{cor}{Corollary}
\newtheorem{rem}{Remark}
\newtheorem{sche}{Scheme}
\newtheorem{algo}{Algorithm}
\newtheorem{defi}{Definition}
\begin{document}
\title{
Alternating direction method of multipliers for convex programming: a lift-and-permute scheme
	\thanks{This research was supported  by
Beijing Natural Science Foundation Z180005 and
National Natural Science Foundation of China
under grants 12171021 and 11822103.}
}

\titlerunning{A lift-and-permute ADMM scheme}        

\author{ Shiru Li  \and Yong Xia  \and   Tao Zhang }


\institute{
	Shiru Li  \and Yong Xia  \and   Tao Zhang  \at
School of Mathematical Sciences, Beihang University, Beijing, 100191, P. R. China
	\email{lishiru@buaa.edu.cn  (S.R. Li); yxia@buaa.edu.cn (Y. Xia); shuxuekuangwu@buaa.edu.cn (T. Zhang, corresponding author)}
}

\date{Received: date / Accepted: date}

\maketitle

\begin{abstract}
A lift-and-permute scheme of  alternating direction method of multipliers (ADMM) is proposed for linearly constrained convex programming. It contains not only the newly developed balanced augmented Lagrangian method and its dual-primal variation, but also the proximal ADMM and Douglas-Rachford splitting algorithm. It helps to propose accelerated algorithms with worst-case $O(1/k^2)$ convergence rates in the case that the objective function to be minimized is strongly convex.
	\keywords{Convex programming \and Augmented Lagrangian method \and Alternating direction method of multipliers \and Douglas-Rachford splitting }
\end{abstract}
\section{Introduction}
Consider  the convex programming problem with linear equality constraints:
\begin{eqnarray*}
({\rm P})~~\min_x &&f(x)\\
{\rm s.t.}&&Ax=b,
\end{eqnarray*}
where $f:\mathbb{R}^n\rightarrow\mathbb{R}$ is closed, proper, convex, but not necessarily smooth, $A\in\mathbb{R}^{m\times n}$, and $b\in\mathbb{R}^{m}$.

As a fundamental and efficient approach for solving (P),
the classical augmented Lagrangian method (ALM), dating back to  \cite{1969Multiplier,1969A},  reads as
\begin{eqnarray*}
~
\left\{
\begin{array}{lcl}
x^{k+1}&\in&\arg\min\limits_{x}\{f(x)+(\lambda^k)^T(Ax-b)+\frac{\beta}{2}\|Ax-b\|^2\},\\
\lambda^{k+1}&=&\lambda^{k}+\beta(Ax^{k+1}-b),
\end{array}
\right.
\end{eqnarray*}
where $\lambda$ is Lagrange multiplier corresponding to the equality constraints and $\beta>0$ is a fixed penalty parameter.
The update of $x^{k+1}$ may have no closed-form solution due to the coupling of $\|Ax-b\|^2$ and $f(x)$. The following  proximal ALM \cite{rockafellar1976augmented} tries to overcome
this difficulty by introducing a carefully designed proximity term:
\begin{eqnarray*}
\left\{
\begin{array}{rcl}
x^{k+1}&\in&\arg\min\limits_{x}\{f(x)+(\lambda^k)^T(Ax-b)
+\frac{\beta}{2}\|Ax-b\|^2+\frac{1}{2}\|x-x^k\|_G^2\},\\
\lambda^{k+1}&=&\lambda^{k}+\beta(Ax^{k+1}-b).
\end{array}
\right.
\end{eqnarray*}
In fact, by choosing $G=rI_n-\beta A^TA$ with   $r>\beta\rho(A^TA)$ and  $\rho(\cdot)$ being the spectral norm, we obtain a reduced update of $x^{k+1}$,
\begin{eqnarray*}
x^{k+1}&\in&\arg\min\limits_{x}\left\{f(x)+\frac{r}{2}\left\|x-x^k+\frac{1}{r}A^T
[\lambda^k+\beta(Ax^k-b)]\right\|^2\right\},
\end{eqnarray*}
which is easy to solve if the following proximal mapping of $f(x)$
$$
\text{prox}_{\gamma f}(x):=(I+\gamma\partial f)^{-1}(x):= \arg\min\limits_{y}f(y)+\frac{1}{2\gamma}\|y-x\|^2
$$
has a closed-form (or easy-to-compute) solution.

  Since $r$ should be set larger than a fixed  proportion of $\rho(A^TA)$,
the shortcoming of the above proximal ALM is that, for large $\rho(A^TA)$,
the iteration sequence $\{x^{k+1}\}$ will get stuck in updating.
There is an alternative first-order primal-dual method presented in \cite{chambolle2011first} with the following iteration formula:
\[
	\begin{cases}			x^{k+1}\in\arg\min\limits_{x}\{f(x)+\frac{r}{2}
\|x-(x^k-\frac{1}{r}A^T\lambda^k)\|^2\},\\
\lambda^{k+1}=\lambda^{k}+\frac{1}{s}[A(2x^{k+1}-x^k)-b],
	\end{cases}
\]
where $r>0$ and $s>0$ satisfy that
$
rs>\rho(A^TA).
$
Again, for large $\rho(A^TA)$, either $r$ or $s$ must be large enough. Then either $\|x^{k+1}-x^k\|$ or $\|\lambda^{k+1}-\lambda^{k}\|$ is small. Recently, He et al. \cite{hmy} relaxed the requirement  to $rs>0.75\rho(A^TA)$.

In order to completely remove the restriction on the step-sizes $r$ and $s$,
He and Yuan \cite{he2021balanced} proposed a simple but effective augmented Lagrangian method, the so-called  balanced ALM,
which reads as
\begin{eqnarray}\label{BALM}
	\begin{cases}			
x^{k+1}=\arg\min\limits_{x}
\{f(x)+\frac{r}{2}\|x-(x^k-\frac{1}{r}A^T\lambda^k)\|^2\},\\
\lambda^{k+1}=\lambda^{k}+(\frac{1}{r}AA^T+\delta I_m)^{-1}[A(2x^{k+1}-x^k)-b],
	\end{cases}
\end{eqnarray}
where $r > 0$ and $\delta > 0$ are arbitrary parameters. Different from the classical ALM and its proximal variations, balanced ALM \eqref{BALM} has an additional cost in  updating $\lambda^{k+1}$ by solving the following linear equations
\begin{equation}
\left(\frac{1}{r}AA^T+\delta I_m\right)(\lambda-\lambda^k)-(A[2x^{k+1}-x^k]-b)=0.\label{BALM-equ}
\end{equation}
Following this idea, Xu \cite{xu2021dual} proposed a dual-primal balanced ALM with the same complexity per iteration as balanced ALM. The iteration is given by
\begin{eqnarray}\label{311}
\begin{cases}
x^{k+1}=\arg \min\limits_{x} \left\{f(x)+\frac{r}{2}\left\|x-\left\{x^{k}-\frac{1}{r} A^{T}\left(2 \lambda^{k}-\lambda^{k-1}\right)\right\}\right\|^{2}\right\},\\\lambda^{k+1}=\lambda^{k}+\left(\frac{1}{r} A A^{T}+\delta I_{m}\right)^{-1}\left(A x^{k+1}-b\right),
\end{cases}
\end{eqnarray}
where $r>0$ and $\delta>0$ are arbitrary parameters.

 (P) can be regarded as a special case of the following two-block  problem:
\begin{eqnarray}\label{P2}
\min\limits_{x,y} \{f(x)+g(y) :~Ax+By=b\},
\end{eqnarray}
where $A\in\mathbb{R}^{m\times n}$, $B\in\mathbb{R}^{m\times l}$,  $b\in\mathbb{R}^{m}$, $f:\mathbb{R}^{n}\rightarrow\mathbb{R}$, $g:\mathbb{R}^{l}\rightarrow\mathbb{R}$ are proper and convex, and $g$ is additionally closed.
The alternating direction method of multipliers (ADMM) \cite{gabay1976dual,glowinski1975appro} is popular for solving \eqref{P2}.
The iterative formula reads as
\begin{eqnarray*}
\begin{cases}
x^{k+1}\in\arg\min\limits_{x}\mathcal{L}_\beta\left(x, y^k,\lambda^k\right),\\
y^{k+1}\in\arg\min\limits_{y}\mathcal{L}_\beta\left(x^{k+1}, y,\lambda^k\right),\\
\lambda^{k+1}=\lambda^k+\beta(Ax^{k+1}+By^{k+1}-b),
\end{cases}
\end{eqnarray*}
where $\mathcal{L}_\beta(x,y,\lambda)$ is the augmented Lagrangian function of  \eqref{P2} defined as
\begin{eqnarray*}
\mathcal{L}_\beta(x,y,\lambda)=f(x)+g(y)+\lambda^T(Ax+By-b)+\frac{\beta}{2}\|Ax+By-b\|^2.
\end{eqnarray*}

Similar to the idea of proximal ALM, proximal ADMM \cite{eckstein1994some} decouples $\|Ax+By-b\|^2$ and the objective function by introducing suitable proximity terms.
The iterative formula can be written as
\begin{eqnarray*}
\begin{cases}
x^{k+1}\in\arg\min\limits_{x}\{\mathcal{L}_\beta\left(x, y^k,\lambda^k\right)+\frac{1}{2}\|x-x^k\|_C^2\},\\
y^{k+1}\in\arg\min\limits_{y}\{\mathcal{L}_\beta\left(x^{k+1}, y,\lambda^k\right)+\frac{1}{2}\|x-x^k\|_D^2\},\\
\lambda^{k+1}=\lambda^k+\beta(Ax^{k+1}+By^{k+1}-b),
\end{cases}
\end{eqnarray*}
where $C$ and $D$ are positive semidefinite matrices. As shown above, letting  $C=rI_n-\beta A^TA$ could lead to an easy-to-solve $x$-subproblem.

Problem \eqref{P2} can be alternatively solved by Douglas-Rachford splitting (DRS) algorithm \cite{2005Splitting}. In general, DRS finds the zero point of the sum of two maximal monotone operators $\mathcal{A}$ and $\mathcal{B}$ via the following iterative formula:
\begin{eqnarray*}
\begin{cases}
w^{k+1}=(I+\tau\mathcal{A})^{-1}(2z^k-w^k)+w^k-z^k,\\
z^{k+1}=(I+\tau\mathcal{B})^{-1}(w^{k+1}).
\end{cases}
\end{eqnarray*}
The equivalence between ADMM and DRS has been established in  \cite{2005Splitting}.

In this paper,
we propose a lift-and-permute scheme of ADMM  for solving problem (P). Each algorithm in our scheme  employs a variant ADMM in a permuted order of updating variable to solve the same dual problem of (P) with additional copying variables.
Surprisingly,  we can show that the above mentioned balanced ALM and its dual-primal variation,   proximal ADMM and DRS algorithm all correspond to algorithms in our scheme. With the help of this understanding,  we propose in the first time an accelerated balanced ALM and its dual-primal variation with the worst-case $O(1/k^2)$ convergence rate in the case that $f(x)$ is strongly convex.

The remainder is organized as follows. Section 2 presents the motivation and the equivalence between DRS and (dual-primal) balanced ALM.
Section 3 proposes a lift-and-permute scheme of ADMM and show it includes (dual-primal) balanced ALM and proximal ADMM. Section 4 presents accelerated  algorithms and convergence rate analysis. Conclusion and future works are given in Section 5.

\section{Why lift}

\subsection{Solve a lifted version via ADMM}
First, we can see that  balanced ALM \eqref{BALM} and its dual-primal variation  \eqref{311} have the same computational complexity as employing the classical ADMM to solve  the following reformulation of (P) with additional copied variables:
\begin{eqnarray*}
\min\limits_{x,y}\{f(x):~Ay=b,~x=y\},
\end{eqnarray*}
which is a case of \eqref{P2}.
The corresponding  ADMM is rewritten as
\begin{eqnarray}
\left\{
\begin{array}{rcl}
x^{k+1}&=&\arg\min\limits_{x}\{f(x)+\frac{\beta}{2}\|x-(y^k-\frac{\lambda_2^k}{\beta})\|\},\\
y^{k+1}&=&\arg\min\limits_{y}
\{ (\lambda_1^k )^TAy- (\lambda_2^k )^Ty+\frac{\beta}{2}\|Ay-b\|^2+\frac{\beta}{2}\|x^{k+1}-y\|^2\},\\
\lambda_1^{k+1}&=&\lambda_1^{k}+\beta(Ay^{k+1}-b),\\
\lambda_2^{k+1}&=&\lambda_2^{k}+\beta(x^{k+1}-y^{k+1}).\\
\end{array}
\right.\label{ADMM2}
\end{eqnarray}
Note that the update of $x^{k+1}$
amounts to the proximal solvers  $\text{prox}_{\frac{1}{\beta}f}(y^k-\frac{\lambda_2^k}{\beta})$. The update of $y^{k+1}$ is equivalent to solving the linear equation system
\[
\left( A^TA+ I\right)z= q^k:=\frac{1}{\beta}\left(\lambda_2^k+\beta A^Tb+\beta x^{k+1}-A^T\lambda_1^k\right).
\]
When $m\ll n$,  by Sherman-Morrison formula, we have
\[
z=\left( A^TA+ I\right)^{-1}q^k=\left( I-A^T\left(I+AA^T\right)^{-1}A\right)q^k
=q^k-A^T\widetilde{y},
\]
where $\widetilde{y}$ is a solution of  the $m$-dimensional system of equations
\begin{equation*}
\left(I+AA^T\right)\widetilde{y}=Aq^k.
\end{equation*}
That is, the main computational complexity is the same as \eqref{BALM-equ}.
Therefore, we have observed that
all iterative subproblems in \eqref{ADMM2} are relatively easy to solve. Moreover,
according to the convergence theory of the classical ADMM, there is no need to assume any restrictive conditions on $\beta$.

\subsection{Equivalence between  DRS and (dual-primal) balanced ALM  }

For a  parameter $\sigma>0$, we lift  (P) as
\[ 
 \min\limits_{x,y} \{f(x):~ Ax=\sigma y,~\sigma y=b\}
\] 
and then  reformulate it as
 \begin{eqnarray}\label{dr2}
 \min\limits_{z=(x,y)} \{f(x)+\phi_{\sigma y=b}(y)+\varphi_{Ax=\sigma y}(z)\},
 \end{eqnarray}
 where $\phi_{\sigma y=b}(y)$ and $\varphi_{Ax=\sigma y}(z)$ are indicator functions of $\sigma y=b$ and $Ax=\sigma y$, respectively. The proximal mappings of $\phi_{\sigma y=b}(y)$  and $\varphi _{Ax=\sigma y}$ are given by
\begin{eqnarray*}
&&\arg\min\limits_{y'}\{\phi_{\sigma y=b}(y')+\frac{1}{2}\|y-y'\|^2\}=b/\sigma,\\
&&\arg\min\limits_{z'}\{\varphi_{Ax=\sigma y}(z')+\frac{1}{2}\|z-z'\|^2\}=z+
\begin{pmatrix}
 A^T\\-\sigma I
\end{pmatrix}
 (AA^T+\sigma^2I)^{-1}(
 A~-\sigma I
)z.
 \end{eqnarray*}
One can easily see that  applying DRS algorithm for solving \eqref{dr2} has the same computational complexity as that of balanced ALM \eqref{BALM} and its dual-primal variation \eqref{311} for solving (P).  Furthermore, we can establish the equivalence between DRS and balanced ALM \eqref{BALM} and then extend the equivalence to dual-primal balanced ALM \eqref{311}.
 \begin{thm}Let $F(z)=f(x)+\phi_{\sigma y=b}(y)$.
 Balanced ALM \eqref{BALM} for solving  (P) is equivalent to applying DRS  with $\mathcal{A}=\partial\varphi_{Ax=\sigma y}$ and $\mathcal{B}=\partial F$ to solve  \eqref{dr2} under the special parametric settings $ \tau r=1$ and $\tau\sigma^2=\delta$.
 \end{thm}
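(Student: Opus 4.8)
The plan is to write out the DRS iteration explicitly with the stated operator assignments $\mathcal{A}=\partial\varphi_{Ax=\sigma y}$ and $\mathcal{B}=\partial F$, using the two proximal mappings already computed in the excerpt, and then algebraically match the resulting recursion to balanced ALM \eqref{BALM} under the substitutions $\tau r=1$ and $\tau\sigma^2=\delta$. Concretely, the DRS step $w^{k+1}=(I+\tau\mathcal{A})^{-1}(2z^k-w^k)+w^k-z^k$ requires the resolvent of $\tau\partial\varphi_{Ax=\sigma y}$; since $\varphi$ is an indicator function, its resolvent is just the Euclidean projection onto the subspace $\{Ax=\sigma y\}$, which is $\tau$-independent and is precisely the formula displayed in the excerpt. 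Likewise $(I+\tau\mathcal{B})^{-1}$ splits across the two blocks: in the $y$-block it is the projection onto $\{\sigma y=b\}$, returning $b/\sigma$, and in the $x$-block it is $\mathrm{prox}_{\tau f}$. So the first step is simply to substitute these closed forms and obtain an explicit four-line recursion in the variables $w^k=(w_x^k,w_y^k)$ and $z^k=(z_x^k,z_y^k)$.

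Next I would introduce the change of variables that turns DRS iterates into balanced-ALM iterates. The natural guess, reading off the structure of \eqref{BALM}, is to identify the primal iterate $x^{k+1}$ of balanced ALM with the $x$-component of $z^{k+1}$ (the output of the $\mathrm{prox}_{\tau f}$ step, which with $\tau r=1$ becomes exactly $\arg\min_x\{f(x)+\tfrac r2\|x-\cdot\|^2\}$), and to identify the multiplier $\lambda^k$ with an appropriately scaled combination of $w^k-z^k$ (the ``reflection gap'' that DRS carries between iterations) together with the $y$-components. Because the $y$-block is pinned to $b/\sigma$ after every $\mathcal B$-step, several terms collapse, and the $(AA^T+\sigma^2 I)^{-1}$ appearing in the projection onto $\{Ax=\sigma y\}$ becomes, after factoring out $\sigma^2$ and using $\tau\sigma^2=\delta$, the operator $(\tfrac1r AA^T+\delta I)^{-1}$ that drives the $\lambda$-update in \eqref{BALM}. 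I would carry out this matching one line at a time: first show the DRS $x$-update reduces to the balanced-ALM $x$-update, then show the combined reflection-plus-$\mathcal A$-resolvent step reproduces the linear solve \eqref{BALM-equ} for $\lambda^{k+1}$, checking that the argument $A(2x^{k+1}-x^k)-b$ emerges from the reflection $2z^k-w^k$ composed with the preceding $x$-update.

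Finally, I would record the precise dictionary between $(w^k,z^k)$ and $(x^k,\lambda^k)$ as an explicit pair of formulas, verify it is a bijection (so that the two iteration sequences determine each other and share the same fixed points / limit), and note that the same computation, run with the operators composed in the opposite order — i.e. taking $\mathcal A=\partial F$ and $\mathcal B=\partial\varphi_{Ax=\sigma y}$, or equivalently starting DRS from the other operator — yields the dual-primal balanced ALM \eqref{311}; this is the ``extend the equivalence'' clause of the statement and follows because swapping the order of the two resolvents in DRS is a well-understood symmetry. I expect the main obstacle to be bookkeeping rather than conceptual: getting the change of variables exactly right, including the scaling factors $\sigma$ and $\tau$ and the sign conventions on the multiplier, so that the reflection term $2z^k-w^k$ produces precisely $2x^{k+1}-x^k$ and not some other affine combination. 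A secondary subtlety is handling the two-block structure of $z=(x,y)$ carefully — the $\mathcal B$-resolvent decouples the blocks but the $\mathcal A$-resolvent (projection onto $\{Ax=\sigma y\}$) couples them, so one must track how the $y$-component, though always reset to $b/\sigma$, still feeds through the projection into the effective $\lambda$-update.
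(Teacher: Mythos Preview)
Your proposal is correct and follows essentially the same route as the paper's proof: expand the DRS iteration using the two closed-form resolvents, identify $x^{k+1}=z_x^{k+1}$ and $\lambda^k=\tfrac{1}{\tau\sigma}\bigl(w_y^{k+1}-b/\sigma\bigr)$, and verify that the resulting $x$- and $\lambda$-updates coincide with \eqref{BALM} under $\tau r=1$, $\tau\sigma^2=\delta$. The only refinement to your outline is that the extrapolation $A(2x^{k+1}-x^k)-b$ does not fall directly out of the reflection $2z^k-w^k$; it appears only after one derives from the projection step the relation $w_x^{k}=\tfrac{1}{\sigma}A^T(b/\sigma-w_y^{k})+x^{k-1}$ and back-substitutes it into the $w_y$-recursion --- exactly the bookkeeping (and the one-step index shift in the multiplier) that you anticipated.
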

 \begin{proof}
 	Applying DRS with $\mathcal{A}=\partial\varphi_{Ax=\sigma y}$ and $\mathcal{B}=\partial F$  yields
 	\begin{eqnarray*}
 		\begin{cases}
 			w^{k+1}=\arg\min\limits_{z}\{ \varphi_{Ax=\sigma y}(z)+\frac{1}{2\tau}\|z-(2z^k-w^k)\|^2\}+w^k-z^k,\\
 			z^{k+1}=\arg\min\limits_{z} \{F(z)+\frac{1}{2\tau}\|z-w^{k+1}\|^2\}.
 		\end{cases}
 	\end{eqnarray*}
 	Let $ w^k=(
 	\widetilde{x}^k,~\widetilde{y}^k
)$, $z^k=(
 	x^k,~y^k
 	)$ and $H=(AA^T+\sigma^2I)^{-1}$. According to the optimality conditions, we can explicitly rewrite the above iterative formula as
 	\begin{eqnarray}
 	&&\widetilde{x}^{k+1}=-A^THA(2x^k-\widetilde{x}^k)+\sigma A^TH(2y^k-\widetilde{y}^k)+x^k\nonumber,\\
 	&&\widetilde{y}^{k+1}=\sigma HA(2x^k-\widetilde{x}^k)-  \sigma^2 H(2y^k-\widetilde{y}^k)+y^k\label{dr5},\\
 	&&	0\in \partial f({x}^{k+1})+\frac{1}{\tau}(x^{k+1}-\widetilde{x}^{k+1})\label{dr3},\\
 	&&	\sigma y^{k+1}=b\label{dr6}.
 	\end{eqnarray}
 	According to \eqref{dr5} and \eqref{dr6}, we  obtain
 	\begin{eqnarray}\label{dr7}
 \widetilde{x}^{k+1}=\frac{1}{\sigma}(-A^T\widetilde{y}^{k+1}+\frac{1}{\sigma}A^Tb)+x^{k}.
 	\end{eqnarray}
Substituting \eqref{dr7}	into \eqref{dr3} yields that
  \[
 	0\in \partial f({x}^{k+1})+\frac{1}{\tau}(x^{k+1}-x^k)+\frac{1}{\tau \sigma}A^T(\widetilde{y}^{k+1}-\frac{1}{\sigma}b).
 	\]
By first rewriting \eqref{dr7} as an update of $\widetilde{x}^k$ from $\widetilde{y}^k$ and $x^{k-1}$ and then
taking  it into \eqref{dr5}, we have
 	\begin{eqnarray}
 	\widetilde{y}^{k+1}
 =&&\sigma HA(2x^k-x^{k-1})+H(AA^T+\sigma^2 I)\widetilde{y}^k\nonumber\\&&-\frac{1}{\sigma}H(AA^T+\sigma^2I)b-\sigma Hb+\frac{1}{\sigma}b\nonumber\\=&&\widetilde{y}^k+
 	\sigma H[A(2x^k-x^{k-1})-b].\nonumber
 	\end{eqnarray}
Let $x^{k+1}:=x^{k+1}$, $\lambda^{k}:=\frac{1}{\tau\sigma}(\widetilde{y}^{k+1}-\frac{1}{\sigma}b)$, $ 1/\tau =r$ and $\tau\sigma^2=\delta$. Then we can recover balanced ALM \eqref{BALM}.
 \end{proof}
 \begin{thm}Let $F(z)=f(x)+\phi_{\sigma y=b}(y)$.
Dual-primal balanced ALM \eqref{311} for solving  (P) is equivalent to applying DRS  with $\mathcal{A}=\partial F$ and $\mathcal{B}=\partial\varphi_{Ax=\sigma y}$ to solve  \eqref{dr2} under the special parametric settings $ \tau  r=1$ and $\tau\sigma^2=\delta$.
 \end{thm}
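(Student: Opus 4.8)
The plan is to rerun the argument of Theorem~1 with the two resolvents of the Douglas--Rachford iteration in the reverse order, i.e.\ with $\mathcal A=\partial F$ applied first and $\mathcal B=\partial\varphi_{Ax=\sigma y}$ second. Writing $w^k=(\widetilde x^k,\widetilde y^k)$, $z^k=(x^k,y^k)$ and $H=(AA^T+\sigma^2 I)^{-1}$, the first resolvent $(I+\tau\partial F)^{-1}(2z^k-w^k)$ separates, since $F(z)=f(x)+\phi_{\sigma y=b}(y)$: its $y$-block is the constant $b/\sigma$, and its $x$-block is $v^{k+1}:=\mathrm{prox}_{\tau f}(2x^k-\widetilde x^k)$, characterized by $0\in\partial f(v^{k+1})+\tfrac1\tau(v^{k+1}-2x^k+\widetilde x^k)$. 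Adding the Douglas--Rachford correction $w^k-z^k$ produces $w^{k+1}$; applying to it the resolvent of $\partial\varphi_{Ax=\sigma y}$, which is just the orthogonal projection onto $\{(x,y):Ax=\sigma y\}$, produces $z^{k+1}=(x^{k+1},y^{k+1})$ with $Ax^{k+1}=\sigma y^{k+1}$.

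Next I would introduce the projection increment $\mu^{k+1}:=H(A\widetilde x^{k+1}-\sigma\widetilde y^{k+1})$, so that $x^{k+1}=\widetilde x^{k+1}-A^T\mu^{k+1}$ and $y^{k+1}=\widetilde y^{k+1}+\sigma\mu^{k+1}$, and use it to eliminate the auxiliary sequences. From the $y$-block relation $\widetilde y^{k+1}=b/\sigma+\widetilde y^k-y^k$ together with $y^k=\tfrac1\sigma Ax^k$ and $\widetilde x^k-x^k=A^T\mu^k$ read off the previous iteration, a short computation yields $\sigma\widetilde y^{k+1}=b-\sigma^2\mu^k$, $\widetilde x^{k+1}=v^{k+1}+A^T\mu^k$, and therefore $\mu^{k+1}=\mu^k+H(Av^{k+1}-b)$. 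Setting $\lambda^k:=\tfrac1\tau\mu^k$ --- equivalently $\lambda^k=-\tfrac{1}{\tau\sigma}(\widetilde y^{k+1}-\tfrac1\sigma b)$, the same formula as in Theorem~1 up to sign --- and using $(\tfrac1r AA^T+\delta I_m)^{-1}=\tfrac1\tau H$ under $\tau r=1$, $\tau\sigma^2=\delta$, this recursion is precisely the $\lambda$-update in \eqref{311}.

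It then remains to match the $x$-subproblem. Substituting $\widetilde x^k=v^k+A^T\mu^{k-1}$ and $x^k=\widetilde x^k-A^T\mu^k$ into the optimality condition above turns the affine part into $\tfrac1\tau(v^{k+1}-v^k)+A^T(2\lambda^k-\lambda^{k-1})$, so that $v^{k+1}$ solves $\min_x\{f(x)+\tfrac r2\|x-\{v^k-\tfrac1r A^T(2\lambda^k-\lambda^{k-1})\}\|^2\}$; renaming $v^k$ as $x^k$, this is the $x$-step of \eqref{311}, and the equivalence follows.

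The step I expect to be the main obstacle is the index bookkeeping just described: because the resolvents are now in the reverse order, three sequences are in play --- the auxiliary $\widetilde x^k$, the Douglas--Rachford iterate $x^k$, and the intermediate $v^k$ that ends up being the dual-primal balanced ALM iterate --- and one must verify that the cross terms collapse into exactly the extrapolation $2\lambda^k-\lambda^{k-1}$ of \eqref{311} and nothing else, and that the telescoped identification of $\lambda^k$ is compatible with the $k=0$ initialization. (One could instead deduce the result from Theorem~1 by using the conjugacy of the reflection operators $R_{\mathcal A}R_{\mathcal B}$ and $R_{\mathcal B}R_{\mathcal A}$, but the direct computation above keeps the correspondence of iterates explicit.)
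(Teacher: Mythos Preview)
Your proposal is correct and follows essentially the same route as the paper: both proofs expand the DRS iteration with $\mathcal A=\partial F$, $\mathcal B=\partial\varphi_{Ax=\sigma y}$, identify the intermediate prox point (your $v^{k+1}$, the paper's $\widehat x^{k+1}:=\widetilde x^{k+1}-\widetilde x^k+x^k$) as the balanced-ALM primal iterate, and set $\lambda^k=-\tfrac{1}{\tau\sigma}(\widetilde y^{k+1}-\tfrac1\sigma b)$. Your introduction of the projection multiplier $\mu^{k+1}=H(A\widetilde x^{k+1}-\sigma\widetilde y^{k+1})$ packages the algebra a bit more cleanly than the paper's chain of substitutions \eqref{dr13}--\eqref{dr16}, but the argument is the same.
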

 \begin{proof}
 	Applying DRS with $\mathcal{B}=\partial\varphi_{Ax=\sigma y}$ and $\mathcal{A}=\partial F$ yields that
 	\begin{eqnarray*}
 		\begin{cases}
 			w^{k+1}=\arg\min\limits_{z}\{F(z) +\frac{1}{2\tau}\|z-(2z^k-w^k)\|^2\}+w^k-z^k,\\
 			z^{k+1}=\arg\min\limits_{z} \{\varphi_{Ax=\sigma y}(z)+\frac{1}{2\tau}\|z-w^{k+1}\|^2\}.
 		\end{cases}
 	\end{eqnarray*}
 	Let $ w^k=(
 	\widetilde{x}^k,~\widetilde{y}^k
 	)$, $z^k=(
 	x^k,~y^k
 	)$, $\widehat{x}^{k+1}=\widetilde{x}^{k+1}-\widetilde{x}^k+x^k$ and $H=(AA^T+\sigma^2I)^{-1}$. It follows from the optimality conditions that
 	\begin{eqnarray}
 	&&0\in\partial f(\widehat{x}^{k+1})+\frac{1}{\tau}(\widehat{x}^{k+1}-2x^k+\widetilde{x}^k),\label{dr9}\\
 	&&\widetilde{y}^{k+1}-\widetilde{y}^k+y^k=\frac{1}{\sigma}b,\label{dr10}\\
 	&&x^{k+1}=-A^THA\widetilde{x}^{k+1}+\sigma A^TH\widetilde{y}^{k+1}+\widetilde{x}^{k+1},\label{dr11}\\
 	&&y^{k+1}=\sigma HA\widetilde{x}^{k+1}-\sigma^2 H\widetilde{y}^{k+1}+\widetilde{y}^{k+1}.\label{dr12}
 	\end{eqnarray}
By combining \eqref{dr11} with \eqref{dr12}, we observe that
 	\begin{eqnarray}
 x^{k+1}=-\frac{1}{\sigma}A^T(y^{k+1}-\widetilde{y}^{k+1})+\widetilde{x}^{k+1}.\label{dr13}
 	\end{eqnarray}
According to the definition of $\widehat{x}^{k+1}$, we have
 	\begin{eqnarray}
 	A^T  \widetilde{y}^{k+1}+\sigma(\widetilde{x}^{k+1}-\widehat{x}^{k+1}) \overset{\eqref{dr13}}{=}
 A^T(\widetilde{y}^{k+1}-\widetilde{y}^k+y^k)
 \overset{\eqref{dr10}}{=}\frac{1}{\sigma}A^Tb.\label{dr14}
 	\end{eqnarray}
 	We also have
 \begin{eqnarray*}		\frac{1}{\sigma}b+\widetilde{y}^{k+1}-\widetilde{y}^{k+2}& \overset{\eqref{dr10}}{=}&y^{k+1} \overset{\eqref{dr12}}{=}\sigma HA\widetilde{x}^{k+1}-\sigma^2 H\widetilde{y}^{k+1}+\widetilde{y}^{k+1}\nonumber\\&\overset{\eqref{dr14}}{=}&-H(AA^T+\sigma^2I)\widetilde{y}^{k+1}+\widetilde{y}^{k+1}+\sigma HA\widehat{x}^{k+1}+\frac{1}{\sigma}HAA^Tb\nonumber\\&=&\sigma HA\widehat{x}^{k+1}+\frac{1}{\sigma}HAA^Tb,
 	\end{eqnarray*}
or equivalently,
\begin{eqnarray}\label{dr15}
 	\widetilde{y}^{k+2}=\widetilde{y}^{k+1}-\sigma H(A\widehat{x}^{k+1}-b
 	).\end{eqnarray}
 	Furthermore, we can deduce
 	\begin{eqnarray}\label{dr16}
 -2x^k+\widetilde{x}^k&\overset{\eqref{dr13}}{=}&\frac{2}{\sigma}A^T(y^k-\widetilde{y}^k)-\widetilde{x}^k\nonumber\\&\overset{\eqref{dr14}}
 {=}&\frac{2}{\sigma}A^Ty^k-\frac{1}{\sigma}A^T\widetilde{y}^k-\widehat{x}^k-\frac{1}{\sigma^2}A^Tb\nonumber\\&\overset{\eqref{dr10}}{=}&A^T(-\frac{2}{\sigma}\widetilde{y}^{k+1}+\frac{1}{\sigma}\widetilde{y}^{k}+\frac{1}{\sigma^2}b)-\widehat{x}^k.
 	\end{eqnarray}
By substituting \eqref{dr16} into \eqref{dr9} and then combining it with \eqref{dr15}, 	we recover  dual-primal balanced ALM \eqref{311}  with $x^{k+1}:=\widehat{x}^{k+1}$, $\lambda^k:=-\frac{1}{\tau\sigma}(\widetilde{y}^{k+1}-\frac{1}{\sigma}b)$, $1/\tau=r$ and $\tau\sigma^2=\delta$.
 \end{proof}

\section{A lift-and-permute scheme of ADMM}

Different from the primal lift as in Section 2, we  lift the dual problem of (P),
\begin{eqnarray}\label{dual}
\max\limits_\lambda\min\limits_x\{f(x)+\lambda^T(Ax-b)\}
=\max\limits_\lambda\{-f^*(-A^T\lambda)-\lambda^Tb\},
\end{eqnarray}
to the following reformulation:
\begin{eqnarray}\label{dual1}
\begin{split}
-\min\limits_{u,v,\lambda} \{f^*(u)+\lambda^Tb:~ -A^Tv=u,~v=\lambda\}.
\end{split}
\end{eqnarray}
The augmented Lagrangian function of problem \eqref{dual1} is given by
\begin{eqnarray*}
&&\mathcal L_{\beta_1,\beta_2}(u,v,\lambda,\bar x,\bar{y})\\
&&=f^*(u)+\lambda^Tb+\bar x^T(u+A^Tv)+\bar{y}^T(v-\lambda)+\frac{\beta_1}{2}\|u+A^Tv\|^2+\frac{\beta_2}{2}\|v-\lambda\|^2,
\end{eqnarray*}
where $\bar{x},\bar{y}$ are Lagrange multipliers, $\beta_{1}>0$ and $\beta_{2}>0$ are two parameters. For convenience, let
$$\mathcal L_{\beta_1,\beta_2}^k(u,v,\lambda,\bar x,\bar{y})=\mathcal L_{\beta_1,\beta_2}(u,v,\lambda,\bar x,\bar{y})-\frac{1}{2\beta_1}\|\bar{x}-\bar{x}^k\|^2-\frac{1}{2\beta_2}\|\bar{y}-\bar{y}^k\|^2.$$

Now we present the lift-and-permute scheme of ADMM for solving (P).
\begin{sche}\label{sche1}
	~\\
	\noindent\rule[0.25\baselineskip]{\textwidth}{1pt}
	\hspace*{0.02in} {\bf Input:}  \hspace*{0.02in}
	maximum iteration number $K$ and	initial point  $\{u^0,v^0,\lambda^0,\bar{x}^0,\bar{y}^0\}.$\\
	\hspace*{0.02in} {\bf Output:} $\{u^{K+1},v^{K+1},\lambda^{K+1},\bar{x}^{K+1},\bar{y}^{K+1}\}.$\\
	\hspace*{0.02in} Let $\{t_{1}-t_{2}-t_{3}-t_{4}-t_{5}\}$ be a permutation of $\{u,v,\lambda,\bar{x},\bar{y}\}.$\\
	\hspace*{0.02in} For $k=1,2,\cdots,K$   do\\
	1.~~ If $t_{1}\in\{u,v,\lambda\}$, then
	$$t_{1}^{k+1}=\arg\min\limits_{t_1}\mathcal L_{\beta_1,\beta_2}^k(t_1,t_{2}^k,t_{3}^k,t_{4}^k,t_{5}^k),$$
	~~\quad else
	$$t_{1}^{k+1}=\arg\max\limits_{t_1}\mathcal L_{\beta_1,\beta_2}^k(t_1,t_{2}^k,t_{3}^k,t_{4}^k,t_{5}^k).$$
	2.~~ If $t_{2}\in\{u,v,\lambda\}$, then\\
	$$t_{2}^{k+1}=\arg\min\limits_{t_2}\mathcal L_{\beta_1,\beta_2}^k(t_{1}^{k+1},t_2,t_{3}^k,t_{4}^k,t_{5}^k),$$
	~~\quad else
	$$t_{2}^{k+1}=\arg\max\limits_{t_2}\mathcal L_{\beta_1,\beta_2}^k(t_{1}^{k+1},t_2,t_{3}^k,t_{4}^k,t_{5}^k).$$
	3.~~ If  $t_{3}\in\{u,v,\lambda\}$,
	$$t_{3}^{k+1}=\arg\min\limits_{t_3}\mathcal L_{\beta_1,\beta_2}^k(t_{1}^{k+1},t_{2}^{k+1},t_3,t_{4}^k,t_{5}^k),$$
	~~\quad else $$t_{3}^{k+1}=\arg\max\limits_{t_3}\mathcal L_{\beta_1,\beta_2}^k(t_{1}^{k+1},t_{2}^{k+1},t_3,t_{4}^k,t_{5}^k).$$
	4.~~ If $t_{4}\in\{u,v,\lambda\}$,
	$$t_{4}^{k+1}=\arg\min\limits_{t_4}\mathcal L_{\beta_1,\beta_2}^k(t_{1}^{k+1},t_{2}^{k+1},t_{3}^{k+1},t_4,t_{5}^k),$$
	~~\quad else  $$t_{4}^{k+1}=\arg\max\limits_{t_4}\mathcal L_{\beta_1,\beta_2}^k(t_{1}^{k+1},t_{2}^{k+1},t_{3}^{k+1},t_4,t_{5}^k).$$
	5.~~ If $t_{5}\in\{u,v,\lambda\}$,
	$$t_{5}^{k+1}=\arg\min\limits_{t_5}\mathcal L_{\beta_1,\beta_2}^k(t_{1}^{k+1},t_{2}^{k+1},t_{3}^{k+1},t_{4}^{k+1},t_5),$$
	~~\quad else $$t_{5}^{k+1}=\arg\max\limits_{t_5}\mathcal L_{\beta_1,\beta_2}^k(t_{1}^{k+1},t_{2}^{k+1},t_{3}^{k+1},t_{4}^{k+1},t_5).$$
	
	\noindent\rule[0.25\baselineskip]{\textwidth}{1pt}
\end{sche}

Scheme \ref{sche1} contains  $5!=120$ algorithms. We can always assume $t_1=u$, since otherwise, we can start from a proper initial point and then generate an iterative sequence coinciding with that of Algorithm  $t_1=u$.  So the scheme \ref{sche1} remains $4!=24$  algorithms.

Up to different initial points, we have the following equivalence,
\begin{eqnarray}
\{u-\bar x-v-\lambda-\bar y\} &\Leftrightarrow&
\{\bar y-u-\bar x-v-\lambda\}\nonumber\\
&\Leftrightarrow&
\{u-\bar y-\bar x-v-\lambda\}\Leftrightarrow
\{u-\bar x-\bar y-v-\lambda\} \label{uyx:1}\\
\{u-\bar x-v-\lambda-\bar y\} &\Leftrightarrow&
\{\lambda-\bar y-u-\bar x-v\}
\Leftrightarrow
\{u-\lambda-\bar y-\bar x-v\}\label{uyx:2}\\
&\Leftrightarrow&
\{u-\lambda-\bar x-\bar y-v\}\Leftrightarrow\{u-\bar x-\lambda-\bar y-v\}\label{uyx:3}
\end{eqnarray}
where \eqref{uyx:1}, the last equivalence in \eqref{uyx:2}, and \eqref{uyx:3} hold since the update  of either  $\bar y$ or $\lambda$ is independent of  $\bar x$ and $u$. In the similar way as above, we can show that all the $24$  algorithms (see the first column in Table \ref{tab:1}) can be classified into four categories as listed in  the second column in  Table \ref{tab:1}.

The equivalence among the remaind four algorithms and (dual-primal) balanced ALM  are summarized in Columns \uppercase\expandafter{\romannumeral2}-\uppercase\expandafter{\romannumeral4}  in Table \ref{tab:1}. The corresponding proofs are given in the next two subsections, respectively.


\begin{table}[htb]\centering
	\caption{Algorithms in the same row are equivalent to each other. } \label{tab:1}
	\begin{center}
		\begin{tabular}{|c|c|c|c|c|} \hline
\uppercase\expandafter{\romannumeral1}&\uppercase\expandafter{\romannumeral2} &\uppercase\expandafter{\romannumeral3} &\uppercase\expandafter{\romannumeral4} &\uppercase\expandafter{\romannumeral5} \\ \hline
			$u-\bar x-\lambda-\bar y-v$& \multirow{5}*{$u-\bar x-v-\lambda-\bar y$}& \multirow{10}*{balanced ALM}&
				\multirow{20}*{\makecell{proximal \\ADMM}}&
					\multirow{20}*{DRS}\\ \cline{1-1}
			$u-\bar x-\bar y-v-\lambda$& & & & \\ \cline{1-1}
			$u-\lambda-\bar x-\bar y-v$& & & & \\ \cline{1-1}
			$u-\lambda-\bar y-\bar x-v$& & & & \\ \cline{1-1}
			$u-\bar y-\bar x-v-\lambda$& & & & \\ \cline{1-2}
			$u-\bar x-v -\bar y-\lambda$& \multirow{5}*{$u-\bar x-\lambda-v-\bar y$}& & & \\ \cline{1-1}
			$u-\bar x-\bar y-\lambda-v$& & & & \\ \cline{1-1}
			$u-\lambda-\bar x-v-\bar y$& & & & \\ \cline{1-1}
			$u-\bar y-\bar x-\lambda-v$& & & & \\ \cline{1-1}
			$u-\bar y-\lambda-\bar x-v$& & & & \\ \cline{1-3}
			$u-v-\lambda-\bar y-\bar x$& \multirow{5}*{$u-v-\lambda-\bar x-\bar y$}& \multirow{10}*{\makecell{dual-primal \\ balanced ALM}}& & \\ \cline{1-1}
			$u-v-\bar x-\lambda-\bar y$& & & &\\ \cline{1-1}
			$u-\lambda-\bar y-v-\bar x$& & & &\\ \cline{1-1}
			$u-\bar y-v-\bar x-\lambda$& & & &\\ \cline{1-1}
			$u-\bar y-v-\lambda-\bar x$& & & &\\ \cline{1-2}
			$u-v-\bar x-\bar y-\lambda$& \multirow{5}*{$u-\lambda-v-\bar x-\bar y$}& & &\\ \cline{1-1}
			$u-v-\bar y-\lambda-\bar x$& & & &\\ \cline{1-1}
			$u-v-\bar y-\bar x-\lambda$& & & &\\ \cline{1-1}
			$u-\lambda-v-\bar y-\bar x$& & & &\\ \cline{1-1}
			$u-\bar y-\lambda-v-\bar x$& & & &\\ \hline
			
		\end{tabular}
	\end{center}
\end{table}

\subsection{Equivalence between Columns \uppercase\expandafter{\romannumeral2} and \uppercase\expandafter{\romannumeral3}}
\subsubsection{Equivalence between Algorithm $\{u-\bar{x}-v-\lambda-\bar{y}\}$ and balanced ALM }

We first write down the algorithm in the scheme \ref{sche1} corresponding to the  order $\{u-\bar{x}-v -\lambda -\bar{y}\}$.

\begin{algo}[Algorithm $\{u-\bar{x}-v-\lambda-\bar{y}\}$]
	\label{algo1}
	\begin{subnumcases}
	~u^{k+1}=\arg\min\limits_{u}\mathcal L_{\beta_1,\beta_2}(u,v^k,\lambda^k,\bar{x}^k,\bar{y}^k),\label{algo1:1}\\
	\bar{x}^{k+1}=\bar{x}^k+\beta_1(u^{k+1}+A^Tv^k),\\
	v^{k+1}=\arg\min\limits_{v}\mathcal L_{\beta_1,\beta_2}(u^{k+1},v,\lambda^k,\bar{x}^{k+1},\bar{y}^{k}),\\
	\lambda^{k+1}=\arg\min\limits_{\lambda}\mathcal L_{\beta_1,\beta_2}(u^{k+1},v^{k+1},\lambda,\bar{x}^{k+1},\bar{y}^{k}),\\
	\bar{y}^{k+1}=\bar{y}^k+\beta_2(v^{k+1}-\lambda^{k+1}).\label{algo1:2}
	\end{subnumcases}
\end{algo}

Surprisingly, we can show that  Algorithm \ref{algo1}  is in fact equivalent to balanced ALM \eqref{BALM} for solving (P).

According to optimality conditions, we first simplify \eqref{algo1:1}-\eqref{algo1:2} as
\begin{subnumcases}
~0\in\partial f^*(u^{k+1})+\bar{x}^k+\beta_1(u^{k+1}+A^Tv^k)\label{11},\\
\bar{x}^{k+1}=\bar{x}^k+\beta_1(u^{k+1}+A^Tv^k)\label{12},\\
0=A\bar{x}^{k+1}+\bar{y}^k+\beta_1Au^{k+1}+\beta_1AA^Tv^{k+1}+\beta_2(v^{k+1}-\lambda^k)\label{13},\\
0=b-\bar{y}^k-\beta_2(v^{k+1}-\lambda^{k+1})\label{14},\\
\bar{y}^{k+1}=\bar{y}^k+\beta_2(v^{k+1}-\lambda^{k+1}).\label{15}
\end{subnumcases}

\begin{lem}\label{lem1}
	For Algorithm \ref{algo1}, we have $v^{k+1}\equiv\lambda^{k+1}$ and $\bar{y}^{k+1}\equiv b$ for all $k\ge0$.
\end{lem}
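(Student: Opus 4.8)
The plan is to prove the identity $\bar y^{k+1}\equiv b$ first, and then derive $v^{k+1}\equiv\lambda^{k+1}$ as an immediate consequence, using the explicit optimality conditions \eqref{11}--\eqref{15}. Looking at \eqref{14}, namely $0=b-\bar y^k-\beta_2(v^{k+1}-\lambda^{k+1})$, and at \eqref{15}, namely $\bar y^{k+1}=\bar y^k+\beta_2(v^{k+1}-\lambda^{k+1})$, I observe that adding them gives $\bar y^{k+1}=\bar y^k+(b-\bar y^k)=b$ — wait, more carefully: from \eqref{14} we get $\beta_2(v^{k+1}-\lambda^{k+1})=b-\bar y^k$, and substituting into \eqref{15} yields $\bar y^{k+1}=\bar y^k+(b-\bar y^k)=b$. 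So $\bar y^{k+1}=b$ holds for every $k\ge 0$, \emph{regardless} of the value of $\bar y^k$; in particular no induction on $k$ is even needed for the claim as stated for $k\ge 0$ (the update produces $\bar y^{k+1}$, which equals $b$).

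Next, for $v^{k+1}\equiv\lambda^{k+1}$: once we know $\bar y^{k}=b$ for $k\ge 1$, we substitute $\bar y^k=b$ back into \eqref{14}, obtaining $0=b-b-\beta_2(v^{k+1}-\lambda^{k+1})$, i.e. $\beta_2(v^{k+1}-\lambda^{k+1})=0$, hence $v^{k+1}=\lambda^{k+1}$ since $\beta_2>0$. This handles all $k\ge 1$. For the base index $k=0$ I would either (i) assume, as the scheme's phrasing "up to different initial points" permits, that the initialization satisfies $\bar y^0=b$, or (ii) simply note that the lemma is really about the generated iterates and read \eqref{14} at $k=0$: it reads $0=b-\bar y^0-\beta_2(v^1-\lambda^1)$, which does not by itself force $v^1=\lambda^1$ unless $\bar y^0=b$. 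So the clean statement is: if $\bar y^0=b$ then $v^{k+1}=\lambda^{k+1}$ and $\bar y^{k+1}=b$ for all $k\ge 0$; and in any case $\bar y^{k+1}=b$ for all $k\ge 0$, so from $k\ge 1$ onward $v^{k+1}=\lambda^{k+1}$ automatically. I would present it as a short induction: the inductive hypothesis $\bar y^k=b$ gives $v^{k+1}=\lambda^{k+1}$ from \eqref{14}, and then \eqref{15} gives $\bar y^{k+1}=\bar y^k=b$, closing the induction; the base case is the standing assumption on the initial point.

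The only genuinely delicate point is bookkeeping the index range and being explicit that the conclusion $\bar y^{k+1}=b$ follows from \eqref{14}--\eqref{15} alone (it is structurally forced by the $\lambda$-subproblem, whose first-order condition in $\lambda$ — the coefficient of $\lambda$ in $\mathcal L_{\beta_1,\beta_2}$ being $-\lambda^Tb-\bar y^T\lambda+\frac{\beta_2}{2}\|v-\lambda\|^2$ — yields exactly $b=\bar y^k+\beta_2(\lambda^{k+1}-v^{k+1})$, equivalently \eqref{14}). I would state clearly that \eqref{14} is precisely the optimality condition for the $\lambda$-update, so it holds by construction. There is no real obstacle here; the proof is two lines of algebra plus a remark pinning down the initial-point convention, and I would keep it to a few sentences. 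With Lemma~\ref{lem1} in hand, the subsequent reduction of Algorithm~\ref{algo1} to balanced ALM~\eqref{BALM} becomes a matter of eliminating $v$, $\lambda$, $\bar y$ from \eqref{11}--\eqref{13} using $v^{k+1}=\lambda^{k+1}$ and $\bar y^k=b$, but that is the content of the next result rather than of this lemma.
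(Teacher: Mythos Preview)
Your argument is correct and follows essentially the same two-line route as the paper: combine \eqref{14} and \eqref{15} to obtain $\bar y^{k+1}=b$, then feed this back into \eqref{14} to force $v^{k+1}=\lambda^{k+1}$. You are in fact more careful than the paper about the index shift---the paper substitutes $\bar y^{k+1}=b$ into \eqref{14}, which actually involves $\bar y^k$, so strictly speaking the conclusion $v^{k+1}=\lambda^{k+1}$ at $k=0$ rests on the tacit initialization $\bar y^0=b$, exactly as you note.
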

\begin{proof}
For any $k\ge0$, it follows from \eqref{14} and\eqref{15} that
\begin{eqnarray}\label{bar-y=b}
\begin{split}
b&=\bar{y}^kk+\beta_2(v^{k+1}-\lambda^{k+1})=\bar{y}^{k+1}.
\end{split}
\end{eqnarray}
Substituting \eqref{bar-y=b} into \eqref{14} yields
$
v^{k+1}=\lambda^{k+1}.
$
The proof is complete.
\end{proof}

\begin{thm}\label{thm:m1}
	Balanced ALM \eqref{BALM} for solving  (P) is equivalent to Algorithm \ref{algo1} with the special parametric settings $\beta_1=1/r$ and $\beta_2=\delta$.
\end{thm}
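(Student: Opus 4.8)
The idea is to start from the already‑simplified optimality system \eqref{11}--\eqref{15} of Algorithm \ref{algo1}, use Lemma \ref{lem1} to discard the redundant variables, and then read off the two blocks of \eqref{BALM} under the identifications $x^k:=-\bar x^k$ and $\lambda^k:=v^k$ (which, by Lemma \ref{lem1}, also equals the $\lambda$‑iterate of Algorithm \ref{algo1}), together with $\beta_1=1/r$ and $\beta_2=\delta$.

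\emph{Recovering the $x$‑subproblem.} Combining \eqref{11} with \eqref{12} gives $0\in\partial f^*(u^{k+1})+\bar x^{k+1}$, i.e. $-\bar x^{k+1}\in\partial f^*(u^{k+1})$; since $f$ is closed proper convex, $\partial f^*=(\partial f)^{-1}$, so this is equivalent to $u^{k+1}\in\partial f(-\bar x^{k+1})=\partial f(x^{k+1})$. Reading \eqref{12} as a formula for $u^{k+1}$ and using $\bar x^{k}=-x^{k}$, $\bar x^{k+1}=-x^{k+1}$, $v^k=\lambda^k$ and $\beta_1=1/r$, one gets $u^{k+1}=-r(x^{k+1}-x^k)-A^T\lambda^k$. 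Substituting this into the inclusion $u^{k+1}\in\partial f(x^{k+1})$ yields $0\in\partial f(x^{k+1})+r\bigl(x^{k+1}-(x^k-\tfrac1r A^T\lambda^k)\bigr)$, which is exactly the optimality condition of the balanced ALM $x$‑update in \eqref{BALM}.

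\emph{Recovering the $\lambda$‑subproblem.} By Lemma \ref{lem1} we have $\bar y^k=b$ and $v^{k+1}=\lambda^{k+1}$. Plugging these, $\bar x^{k+1}=-x^{k+1}$, and the consequence $\beta_1 A u^{k+1}=-A(x^{k+1}-x^k)-\tfrac1r AA^T\lambda^k$ into \eqref{13}, the three places where $AA^T$ or $A^T\lambda$ enter merge into the single operator $\tfrac1r AA^T+\delta I_m$, and \eqref{13} collapses to $\bigl(\tfrac1r AA^T+\delta I_m\bigr)(\lambda^{k+1}-\lambda^k)=A(2x^{k+1}-x^k)-b$, i.e. the balanced ALM $\lambda$‑update. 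Equations \eqref{14}--\eqref{15} carry no further information, being precisely the content of Lemma \ref{lem1}; conversely every balanced ALM trajectory arises in this way, which gives the claimed two‑sided equivalence.

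\emph{Expected difficulty.} The computations are elementary, so the only genuine care needed is in (i) the bookkeeping of the $\lambda$‑elimination, making the separate $AA^T$‑terms of \eqref{13} combine correctly into $\tfrac1r AA^T+\delta I_m$, and (ii) the initialization: since Lemma \ref{lem1} only guarantees $\bar y^k=b$ and $v^k=\lambda^k$ for $k\ge 1$, the equivalence is understood up to the choice of initial point — one starts Algorithm \ref{algo1} from $\bar x^0=-x^0$, $v^0=\lambda^0$ (the common initial multiplier), $\bar y^0=b$ (with $u^0$ irrelevant), after which the sequences $\{(-\bar x^k,v^k)\}$ of Algorithm \ref{algo1} and $\{(x^k,\lambda^k)\}$ of \eqref{BALM} coincide for all $k$.
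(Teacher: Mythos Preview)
Your proposal is correct and follows essentially the same route as the paper: define $x^k:=-\bar x^{k}$, use \eqref{11}--\eqref{12} with the conjugate subdifferential relation to recover the $x$-update, then substitute the resulting expression for $u^{k+1}$ together with Lemma~\ref{lem1} into \eqref{13} to recover the $\lambda$-update. Your treatment of the initialization caveat (choosing $\bar y^0=b$, $v^0=\lambda^0$ so that Lemma~\ref{lem1} applies from $k=0$) is in fact slightly more careful than the paper's own argument, which uses $\bar y^k=b$ and $v^k=\lambda^k$ without commenting on the $k=0$ step.
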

\begin{proof}
Define $x^{k}:=-\bar{x}^{k}$ for all $k\geq0$. By \eqref{12}, we have
\begin{equation}
\bar{x}^k+\beta_1(u^{k+1}+A^Tv^k)=-x^{k+1}\label{111}.
\end{equation}
Substituting \eqref{111} into \eqref{11} yields that
\begin{eqnarray}
&&0\in\partial f^*(u^{k+1})-x^{k+1}\nonumber\\&\Longleftrightarrow&u^{k+1}\in\partial f(x^{k+1})\nonumber\\
&\Longleftrightarrow&0\in\partial f(x^{k+1})+\frac{1}{\beta_1}(x^{k+1}-x^k)+A^Tv^k\nonumber\\
&\Longleftrightarrow&0\in\partial f(x^{k+1})+\frac{1}{\beta_1}(x^{k+1}-x^k)+A^T\lambda^k,\label{112}
\end{eqnarray}
where the first equivalence holds since
$
x\in\partial f^*(u)\Longleftrightarrow u\in\partial f(x),
$
and equation \eqref{112} follows from Lemma \ref{lem1}.
With the setting $\beta_1=1/r$,  \eqref{112} is exactly the optimality condition of the $x$-subproblem of balanced ALM \eqref{BALM}.

Moreover, it follows from \eqref{111} that
\begin{equation}\label{thm:p0}
u^{k+1}=-\frac{1}{\beta_1}x^{k+1}+\frac{1}{\beta_1}x^k-A^Tv^k.
\end{equation}
By taking the setting $\beta_2=\delta$ into \eqref{13}, we have
\begin{eqnarray}
0&=&(\beta_1AA^T+\beta_2I)v^{k+1}+A\bar{x}^{k+1}+\beta_1Au^{k+1}-\beta_2\lambda^k+\bar{y}^k \label{thm:p1}\\
&=&(\beta_1AA^T+\beta_2I)v^{k+1}-2Ax^{k+1}+Ax^k-\beta_1AA^T v^k-\beta_2v^k+\bar{y}^k \label{thm:p2}\\
&=&(\beta_1AA^T+\beta_2I)(v^{k+1}-v^k)-[A(2x^{k+1}-x^k)-b] \nonumber\\
&=&(\frac{1}{r}AA^T+\delta I)(\lambda^{k+1}-\lambda^k)-[A(2x^{k+1}-x^k)-b] \label{thm:p4},
\end{eqnarray}
where equation \eqref{thm:p2} is obtained by substituting \eqref{thm:p0} into \eqref{thm:p1} and the equation \eqref{thm:p4} follows from Lemma \ref{lem1}. The proof is complete since the equation \eqref{thm:p4} corresponds to the $\lambda$-subproblem of balanced ALM \eqref{BALM}.
\end{proof}

\subsubsection{Equivalence between Algorithm $\{u-v-\lambda-\bar{x}-\bar{y}\}$ and dual-primal balanced ALM }

By replacing the update order of Algorithm \ref{algo1} with  $\{u-v-\lambda-\bar{x}-\bar{y}\}$, we obtain the following algorithm, which corresponds to the classical ADMM in solving the three-block convex optimization problem.
\begin{algo}[Algorithm $\{u-v-\lambda-\bar{x}-\bar{y}\}$]
	\label{algo2}
\[	\begin{cases}
	u^{k+1}=\arg\min\limits_{u}\mathcal L_{\beta_1,\beta_2}(u,v^k,\lambda^k,\bar{x}^k,\bar{y}^k),\nonumber\\
	v^{k+1}=\arg\min\limits_{v}\mathcal L_{\beta_1,\beta_2}(u^{k+1},v,\lambda^k,\bar{x}^{k},\bar{y}^{k}),\nonumber\\
	\lambda^{k+1}=\arg\min\limits_{\lambda}\mathcal L_{\beta_1,\beta_2}(u^{k+1},v^{k+1},\lambda,\bar{x}^{k},\bar{y}^{k}),\nonumber\\
	\bar{x}^{k+1}=\bar{x}^k+\beta_1(u^{k+1}+A^Tv^{k+1}),\nonumber\\
	\bar{y}^{k+1}=\bar{y}^k+\beta_2(v^{k+1}-\lambda^{k+1}).  \nonumber
	\end{cases}\]
\end{algo}

Based on optimality conditions, we can rewrite Algorithm \ref{algo2} as
\begin{subnumcases}~
0\in\partial f^*(u^{k+1})+\bar{x}^k+\beta_1(u^{k+1}+A^Tv^k)\label{A21},\\
0=A\bar{x}^k+\bar{y}^k+\beta_1Au^{k+1}+\beta_1AA^Tv^{k+1}+\beta_2(v^{k+1}-\lambda^k)\label{A22},\\
0=b-\bar{y}^k-\beta_2(v^{k+1}-\lambda^{k+1})\label{A23},\\
\bar{x}^{k+1}=\bar{x}^k+\beta_1(u^{k+1}+A^Tv^{k+1})\label{A24},\\
\bar{y}^{k+1}=\bar{y}^k+\beta_2(v^{k+1}-\lambda^{k+1}).\label{A25}
\end{subnumcases}
To reveal the equivalence, we first need an observation similar to Lemma \ref{lem1}.
\begin{lem}\label{lem2}
	For Algorithm \ref{algo2}, we have $v^{k+1}=\lambda^{k+1}$ and $\bar{y}^{k+1}=b$ for all $k\ge0$.
\end{lem}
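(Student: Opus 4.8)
The plan is to reproduce the proof of Lemma \ref{lem1} essentially word for word. The point is that moving $\bar{x}$ from the second position of Algorithm \ref{algo1} to just before $\bar{y}$ in Algorithm \ref{algo2} does not disturb the $\lambda$-update or the $\bar{y}$-update, because neither the $\lambda$-subproblem nor the $\bar{y}$-subproblem of $\mathcal L_{\beta_1,\beta_2}$ involves $\bar{x}$ or $u$. Consequently the two relations we need are already recorded as \eqref{A23} and \eqref{A25}, which have exactly the same form as \eqref{14} and \eqref{15}.

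Concretely, first I would add the optimality condition \eqref{A23}, namely $\bar{y}^k+\beta_2(v^{k+1}-\lambda^{k+1})=b$, to the multiplier recursion \eqref{A25}, namely $\bar{y}^{k+1}=\bar{y}^k+\beta_2(v^{k+1}-\lambda^{k+1})$; this immediately gives $\bar{y}^{k+1}=b$ for every $k\ge 0$. Then I would substitute $\bar{y}^k=b$ back into \eqref{A23} to obtain $\beta_2(v^{k+1}-\lambda^{k+1})=0$, and hence $v^{k+1}=\lambda^{k+1}$ since $\beta_2>0$, exactly as in Lemma \ref{lem1}.

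I do not expect any real obstacle here: the argument is purely algebraic and local to the last two updates, and it is structurally identical to the one already carried out for Algorithm \ref{algo1}. The only detail worth stating carefully is the very first iteration (the step from the initial point), which is handled by the same convention on the initial point that is already implicit in Lemma \ref{lem1}; once the first step is in place the induction is trivial.
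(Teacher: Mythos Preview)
Your proposal is correct and matches the paper's approach exactly: the paper omits the proof of Lemma \ref{lem2} entirely, presenting it as an observation ``similar to Lemma \ref{lem1},'' precisely because \eqref{A23}--\eqref{A25} coincide with \eqref{14}--\eqref{15} and the argument carries over verbatim. Your remark about the initial-point convention is the only subtlety, and it is handled the same way in both lemmas.
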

\begin{thm}\label{thm2}
With the special parametric settings $\beta_1=1/r$ and $\beta_2=\delta$,
	Algorithm \ref{algo2}  is equivalent to dual-primal balanced ALM for solving (P).
\end{thm}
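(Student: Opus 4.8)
The plan is to follow the template of Theorem~\ref{thm:m1}: first use Lemma~\ref{lem2} to collapse Algorithm~\ref{algo2} to a recursion in $(u,\lambda,\bar x)$ only, then introduce a primal iterate through the conjugate-subgradient identity, and finally read off the two lines of dual-primal balanced ALM \eqref{311}. By Lemma~\ref{lem2} we may substitute $v^{k}=\lambda^{k}$ and $\bar y^{k}=b$ throughout \eqref{A21}--\eqref{A25}, so effectively only \eqref{A21}, \eqref{A22} and \eqref{A24} survive. Define the candidate primal point $x^{k+1}:=-\bar x^{k}-\beta_{1}(u^{k+1}+A^{T}\lambda^{k})$. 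With this substitution \eqref{A21} becomes $0\in\partial f^{*}(u^{k+1})-x^{k+1}$, and the identity $x\in\partial f^{*}(u)\Leftrightarrow u\in\partial f(x)$ (already used in Theorem~\ref{thm:m1}) yields $u^{k+1}\in\partial f(x^{k+1})$.

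Next I would express the lifted multiplier $\bar x^{k}$ through $x^{k}$ and the two latest $\lambda$'s. Reading the definition of $x^{k}$ backwards gives $\bar x^{k-1}=-x^{k}-\beta_{1}(u^{k}+A^{T}\lambda^{k-1})$; inserting this into the $\bar x$-update \eqref{A24} taken at step $k-1$ (again with $v^{k}=\lambda^{k}$) produces $\bar x^{k}=-x^{k}+\beta_{1}A^{T}(\lambda^{k}-\lambda^{k-1})$. Substituting this back into the defining relation for $x^{k+1}$ and solving for $u^{k+1}$ gives $u^{k+1}=\frac{1}{\beta_{1}}(x^{k}-x^{k+1})-A^{T}(2\lambda^{k}-\lambda^{k-1})$, so $u^{k+1}\in\partial f(x^{k+1})$ becomes $0\in\partial f(x^{k+1})+\frac{1}{\beta_{1}}(x^{k+1}-x^{k})+A^{T}(2\lambda^{k}-\lambda^{k-1})$. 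Under $\beta_{1}=1/r$ this is exactly the optimality condition of the $x$-subproblem of \eqref{311}, and the appearance of the extrapolated multiplier $2\lambda^{k}-\lambda^{k-1}$ is precisely what the elimination of $\bar x$ contributes.

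For the multiplier recursion I would feed the same identities into \eqref{A22}. Applying $A$ to the definition of $x^{k+1}$ gives $A\bar x^{k}+\beta_{1}Au^{k+1}=-Ax^{k+1}-\beta_{1}AA^{T}\lambda^{k}$; plugging this, together with $\bar y^{k}=b$ and $v^{k+1}=\lambda^{k+1}$, into \eqref{A22} and collecting terms yields $(\beta_{1}AA^{T}+\beta_{2}I)(\lambda^{k+1}-\lambda^{k})=Ax^{k+1}-b$, i.e. $\lambda^{k+1}=\lambda^{k}+(\frac{1}{r}AA^{T}+\delta I)^{-1}(Ax^{k+1}-b)$ once $\beta_{1}=1/r$, $\beta_{2}=\delta$; this is the $\lambda$-update of \eqref{311}. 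Running the same chain of substitutions in reverse shows conversely that every dual-primal balanced ALM trajectory is generated by Algorithm~\ref{algo2} started from a matching initial point, which gives the claimed equivalence.

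The step that needs the most care is the index bookkeeping. In Algorithm~\ref{algo2} the $u$-step uses $v^{k}$ while the $\bar x$-step uses $v^{k+1}$, and it is exactly this one-step mismatch, propagated through the elimination of $\bar x$, that turns the plain multiplier of balanced ALM into the extrapolated one of the dual-primal variant; keeping the shifted indices consistent, and hence pinning down the translation between the initial data $\{u^{0},v^{0},\lambda^{0},\bar x^{0},\bar y^{0}\}$ and $\{x^{0},\lambda^{-1},\lambda^{0}\}$, is the only delicate point. Everything else reduces to the same routine substitutions as in the proof of Theorem~\ref{thm:m1}.
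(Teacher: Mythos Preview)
Your proposal is correct and follows essentially the same approach as the paper's proof: the same primal definition $x^{k+1}:=-\bar x^{k}-\beta_{1}(u^{k+1}+A^{T}v^{k})$, the same conjugate-subgradient flip, and the same key identity $\bar x^{k}=-x^{k}+\beta_{1}A^{T}(v^{k}-v^{k-1})$ that produces the extrapolated multiplier. The only cosmetic differences are that you invoke Lemma~\ref{lem2} at the outset rather than at the end, and for the $\lambda$-recursion you apply $A$ directly to the defining relation for $x^{k+1}$ instead of to the $\bar x$-update \eqref{A24}; the latter is in fact a slightly shorter route to \eqref{A22:3} than the paper's detour through \eqref{221} and \eqref{118}.
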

\begin{proof}
For any $k\ge1$, define
\begin{equation}\label{115}
x^{k+1}:=-\bar{x}^k-\beta_1u^{k+1}-\beta_1A^Tv^k,
\end{equation}
which implies that
\begin{eqnarray}
u^{k+1}&=&-\frac{1}{\beta_1}\bar{x}^k-\frac{1}{\beta_1}x^{k+1}-A^Tv^k.
\label{119}
\end{eqnarray}
Substituting \eqref{119} into \eqref{A24} to replace $u^{k+1}$ yields that
\begin{equation}
\bar{x}^{k+1}
=-x^{k+1}+\beta_1A^T(v^{k+1}-v^k).\label{118}
\end{equation}
Then, we have the following reformulations of \eqref{A21}:
\begin{eqnarray}
&&0\in\partial f^*(u^{k+1})+\bar{x}^k+\beta_1(u^{k+1}+A^Tv^k) \nonumber\\
\Longleftrightarrow &&0\in\partial f^*(u^{k+1})-x^{k+1} \label{A2:1}\\
\Longleftrightarrow &&0\in\partial f(x^{k+1})-u^{k+1} \label{A2:2}\\
\Longleftrightarrow &&0\in\partial f(x^{k+1})+\frac{1}{\beta_1}\bar{x}^k+\frac{1}{\beta_1}x^{k+1}+A^Tv^k \label{A2:3}\\
\Longleftrightarrow &&0\in\partial f(x^{k+1})+\frac{1}{\beta_1}(x^{k+1}-x^k)+A^T(2v^k-v^{k-1}) \label{A2:4}\\
\Longleftrightarrow &&0\in\partial f(x^{k+1})+r(x^{k+1}-x^k)+A^T(2\lambda^k-\lambda^{k-1}) \label{222}.
\end{eqnarray}
where the equation \eqref{A2:1} follows from substituting  \eqref{115} into
\eqref{A21}, the equations \eqref{A2:3} and \eqref{A2:4} are obtained by substituting \eqref{119} and \eqref{118} (with $k:=k-1$) into \eqref{A2:2} and \eqref{A2:3}, respectively, the
equation \eqref{222} is due to Lemma \ref{lem2}.

Multiplying both sides of \eqref{A24}  by $A$ yields that
\begin{eqnarray}\label{221}
A(\bar{x}^k+\beta_1u^{k+1})=A\bar{x}^{k+1}-\beta_1AA^Tv^{k+1}.
\end{eqnarray}
Then,  we can simplify \eqref{A22} as follows:
\begin{eqnarray}
&&0=A\bar{x}^k+\bar{y}^k+\beta_1Au^{k+1}+\beta_1AA^Tv^{k+1}+\beta_2(v^{k+1}-\lambda^k) \nonumber\\
\Longleftrightarrow &&0=(\beta_1AA^T+\beta_2I)v^{k+1}+A(\bar{x}^k+\beta_1 u^{k+1})+\bar{y}^k
-\beta_2\lambda^k \nonumber\\
\Longleftrightarrow &&0=(\beta_1AA^T+\beta_2I)v^{k+1}+A\bar{x}^{k+1}-\beta_1AA^Tv^{k+1}+\bar{y}^k-\beta_2\lambda^k \label{A22:1}\\
\Longleftrightarrow &&0=(\beta_1AA^T+\beta_2I)(v^{k+1}-v^k)-Ax^{k+1}+b \label{A22:2}\\
\Longleftrightarrow &&(\frac{1}{r}AA^T+\delta I)(\lambda^{k+1}-\lambda^k)=Ax^{k+1}-b, \label{A22:3}
\end{eqnarray}
where the equation \eqref{A22:1} follows from \eqref{221}, the
equation \eqref{A22:2} is obtained from substituting \eqref{118} into  \eqref{A22:1}, and the equation \eqref{A22:3}
follows from Lemma \ref{lem2}.

We complete the proof by combing both \eqref{222} and \eqref{A22:3} with \eqref{311}.
\end{proof}

\subsubsection{Equivalence between Algorithm $\{u-\bar{x}-\lambda-v-\bar{y}\}$ and balanced ALM}
\begin{algo}[Algorithm $\{u-\bar{x}-\lambda-v-\bar{y}\}$]
	\label{algo4}
\[	\begin{cases}
	u^{k+1}=\arg\min\limits_{u}\mathcal L_{\beta_1,\beta_2}(u,v^k,\lambda^k,\bar{x}^k,\bar{y}^k),\nonumber\\
	\bar{x}^{k+1}=\bar{x}^k+\beta_1(u^{k+1}+A^Tv^k),\nonumber\\
	\lambda^{k+1}=\arg\min\limits_{\lambda}\mathcal L_{\beta_1,\beta_2}(u^{k+1},v^k,\lambda,\bar{x}^{k+1},\bar{y}^{k}),\nonumber\\
	v^{k+1}=\arg\min\limits_{v}\mathcal L_{\beta_1,\beta_2}(u^{k+1},v,\lambda^{k+1},\bar{x}^{k+1},\bar{y}^{k}),\nonumber\\ \bar{y}^{k+1}=\bar{y}^k+\beta_2(v^{k+1}-\lambda^{k+1}).\nonumber
	\end{cases}
\]
\end{algo}

By optimality conditions, we can rewrite Algorithm \ref{algo4} as follows:
\begin{subnumcases}
~0\in\partial f^*(u^{k+1})+\bar{x}^k+\beta_1(u^{k+1}+A^Tv^k),\label{41}\\
\bar{x}^{k+1}=\bar{x}^k+\beta_1(u^{k+1}+A^Tv^k),\label{42}\\
0=b-\bar{y}^k-\beta_2(v^k-\lambda^{k+1}),\label{43}\\
0=A\bar{x}^{k+1}+\bar{y}^k+\beta_1Au^{k+1}+\beta_1AA^Tv^{k+1}+\beta_2(v^{k+1}-\lambda^{k+1}),\label{44}\\
\bar{y}^{k+1}=\bar{y}^k+\beta_2(v^{k+1}-\lambda^{k+1}).\label{45}
\end{subnumcases}

Similarly we can prove that Algorithm \ref{algo4} is equivalent to balanced ALM.

\begin{thm}\label{thm:m5}
	Balanced ALM \eqref{BALM} for solving problem (P) is equivalent to Algorithm \ref{algo4} with the special parametric settings $\beta_1=1/r$ and $\beta_2=\delta$.
\end{thm}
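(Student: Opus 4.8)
\textbf{Proof proposal for Theorem~\ref{thm:m5}.}

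The plan is to mirror the proof of Theorem~\ref{thm:m1}: start from the optimality-condition form \eqref{41}--\eqref{45} of Algorithm~\ref{algo4}, introduce the change of variables $x^{k}:=-\bar{x}^{k}$, and exhibit the two subproblems of balanced ALM \eqref{BALM}. The only structural change compared with Algorithm~\ref{algo1} is the identification of the balanced-ALM multiplier: since here $\lambda$ is updated before $v$, the right identification is $\lambda^{k}:=v^{k}$ (together with $r=1/\beta_{1}$ and $\delta=\beta_{2}$), rather than the scheme variable $\lambda^{k}$. Because the $x$-subproblem of \eqref{BALM} is strongly convex, matching optimality conditions is enough to conclude equivalence of the generated sequences; the converse direction (realizing a balanced-ALM run as a run of Algorithm~\ref{algo4} by an appropriate choice of $u^{0},v^{0},\lambda^{0},\bar{x}^{0},\bar{y}^{0}$) is covered by the ``up to different initial points'' convention already adopted.

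First I would recover the $x$-step, which is essentially identical to \eqref{11}--\eqref{112}. From \eqref{42} we have $\beta_{1}(u^{k+1}+A^{T}v^{k})=x^{k}-x^{k+1}$; substituting this into \eqref{41} collapses it to $0\in\partial f^{*}(u^{k+1})-x^{k+1}$, i.e.\ $u^{k+1}\in\partial f(x^{k+1})$ by conjugacy. Eliminating $u^{k+1}$ between the last two relations yields
\[
0\in\partial f(x^{k+1})+\tfrac{1}{\beta_{1}}(x^{k+1}-x^{k})+A^{T}v^{k},
\]
which is exactly the optimality condition of the $x$-subproblem of \eqref{BALM} once $r=1/\beta_{1}$ and $\lambda^{k}=v^{k}$.

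The $\lambda$-step is where the argument differs from Theorem~\ref{thm:m1}, and this is the step I expect to require the most care. There is no analogue of Lemma~\ref{lem1} forcing $v^{k+1}=\lambda^{k+1}$ and $\bar{y}^{k+1}=b$ (indeed \eqref{43} and \eqref{45} only give $\bar{y}^{k+1}=b+\beta_{2}(v^{k+1}-v^{k})$). Instead I would use \eqref{43} at the \emph{same} index $k$ to write $\beta_{2}\lambda^{k+1}=\bar{y}^{k}+\beta_{2}v^{k}-b$ and substitute it into \eqref{44}; the point is that the $\bar{y}^{k}$ terms then cancel, leaving $0=A\bar{x}^{k+1}+\beta_{1}Au^{k+1}+\beta_{1}AA^{T}v^{k+1}+\beta_{2}(v^{k+1}-v^{k})+b$. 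Inserting $\bar{x}^{k+1}=-x^{k+1}$ and $\beta_{1}Au^{k+1}=A(x^{k}-x^{k+1})-\beta_{1}AA^{T}v^{k}$ (again from \eqref{42}) and collecting terms gives
\[
(\beta_{1}AA^{T}+\beta_{2}I)(v^{k+1}-v^{k})=A(2x^{k+1}-x^{k})-b,
\]
which with $\beta_{1}=1/r$, $\beta_{2}=\delta$, $\lambda^{k}=v^{k}$ is precisely the $\lambda$-update of \eqref{BALM}. Combining the two displayed relations finishes the proof. The main obstacle is therefore purely bookkeeping: tracking which iteration index appears in \eqref{43} versus \eqref{44}, and checking that the variables $\bar{y}^{k}$ and the scheme variable $\lambda^{k+1}$ genuinely decouple from the recursion in $(x^{k},v^{k})$, so that one never needs to carry them along.
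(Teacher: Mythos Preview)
Your proposal is correct and follows essentially the same route as the paper: define $x^{k}:=-\bar{x}^{k}$, derive the $x$-step from \eqref{41}--\eqref{42} via conjugacy, and then reduce \eqref{44} to the balanced-ALM $\lambda$-update in the variable $v^{k}$. The only cosmetic difference is that the paper eliminates $\bar{y}^{k}$ by first passing through $\bar{y}^{k+1}=b+\beta_{2}(v^{k+1}-v^{k})$ (via \eqref{45} then \eqref{43}), whereas you substitute \eqref{43} directly into \eqref{44} so that $\bar{y}^{k}$ cancels in one stroke; both manipulations are equivalent and yield the same final identity.
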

\begin{proof}
Define $x^{k}:=-\bar{x}^{k}$ for all $k\geq0$. By \eqref{42}, we obtain
\begin{equation}
\bar{x}^k+\beta_1(u^{k+1}+A^Tv^k)=-x^{k+1}\label{411}.
\end{equation}
Substituting \eqref{411} into \eqref{41} yields that
\begin{eqnarray}
0\in\partial f^*(u^{k+1})-x^{k+1}&\Longleftrightarrow&u^{k+1}\in\partial f(x^{k+1})\nonumber\\
&\Longleftrightarrow&0\in\partial f(x^{k+1})+\frac{1}{\beta_1}(x^{k+1}-x^k)+A^Tv^k\nonumber\\
&\Longleftrightarrow&0\in\partial f(x^{k+1})+r(x^{k+1}-x^k)+A^Tv^k.\label{412}
\end{eqnarray}
Moreover, we have
\begin{eqnarray}
&&u^{k+1}=-\frac{1}{\beta_1}x^{k+1}+\frac{1}{\beta_1}x^k-A^Tv^k,\label{thm:p40}\\
&&\bar y^k=\bar y^{k+1}-\beta_2(v^{k+1}-\lambda^{k+1}),\label{thm:p41}
\end{eqnarray}
which follows from \eqref{411} and \eqref{45}, respectively.
Substituting \eqref{thm:p41} into \eqref{43} yields that
\[
\bar y^{k+1}=b+\beta_2(v^{k+1}-v^k).
\]
By substituting \eqref{thm:p40} and \eqref{thm:p41} into \eqref{44}, respectively, we can obtain
\begin{eqnarray}
0&=&A\bar{x}^{k+1}+\left[\bar{y}^k+\beta_2(v^{k+1}-\lambda^{k+1})\right]+\beta_1Au^{k+1}+\beta_1AA^Tv^{k+1}\nonumber\\
&=&-Ax^{k+1}+\bar y^{k+1}+\beta_1Au^{k+1}+\beta_1AA^Tv^{k+1}\nonumber\\
&=&-Ax^{k+1}+b+\beta_2(v^{k+1}-v^k)-Ax^{k+1}+Ax^k-\beta_1AA^Tv^k+\beta_1AA^Tv^{k+1}\nonumber\\
&=&(\beta_1AA^T+\beta_2I)(v^{k+1}-v^k)-[A(2x^{k+1}-x^k)-b]\nonumber\\
&=&(\frac{1}{r}AA^T+\delta I)(v^{k+1}-v^k)-[A(2x^{k+1}-x^k)-b].\label{thm:p43}
\end{eqnarray}
Notice that \eqref{412} and \eqref{thm:p43} are  optimality conditions of balanced ALM.
\end{proof}

\subsubsection{Equivalence between Algorithm $\{u-\lambda-v-\bar{x}-\bar{y}\}$ and dual-primal balanced ALM}

\begin{algo}[Algorithm $\{u-\lambda-v-\bar{x}-\bar{y}\}$]
	\label{algo5}
\[
\begin{cases}
	u^{k+1}=\arg\min\limits_{u}\mathcal L_{\beta_1,\beta_2}(u,v^k,\lambda^k,\bar{x}^k,\bar{y}^k),\nonumber\\
	\lambda^{k+1}=\arg\min\limits_{\lambda}\mathcal L_{\beta_1,\beta_2}(u^{k+1},v^k,\lambda,\bar{x}^{k},\bar{y}^{k}),\nonumber\\
	v^{k+1}=\arg\min\limits_{v}\mathcal L_{\beta_1,\beta_2}(u^{k+1},v,\lambda^{k+1},\bar{x}^{k},\bar{y}^{k}),\nonumber\\ \bar{x}^{k+1}=\bar{x}^k+\beta_1(u^{k+1}+A^Tv^{k+1}),\nonumber\\ \bar{y}^{k+1}=\bar{y}^k+\beta_2(v^{k+1}-\lambda^{k+1}).\nonumber
	\end{cases}
\]
\end{algo}

Based on optimality conditions, we can rewrite Algorithm \ref{algo5} as
\begin{subnumcases}
~0\in\partial f^*(u^{k+1})+\bar{x}^k+\beta_1(u^{k+1}+A^Tv^k),\label{A61}\\
0=b-\bar{y}^k-\beta_2(v^k-\lambda^{k+1}),\label{A62}\\
0=A\bar{x}^k+\bar{y}^k+\beta_1Au^{k+1}+\beta_1AA^Tv^{k+1}+\beta_2(v^{k+1}-\lambda^{k+1}),\label{A63}\\
\bar{x}^{k+1}=\bar{x}^k+\beta_1(u^{k+1}+A^Tv^{k+1}),\label{A64}\\
\bar{y}^{k+1}=\bar{y}^k+\beta_2(v^{k+1}-\lambda^{k+1}).\label{A65}
\end{subnumcases}
We reveal the equivalence between Algorithm \ref{algo5} and dual-primal balanced ALM.

\begin{thm}\label{thm6}
	Algorithm \ref{algo5}  is equivalent to  dual-primal balanced ALM for solving (P) with the special parametric settings $\beta_1=1/r$ and $\beta_2=\delta$.
\end{thm}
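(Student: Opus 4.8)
The plan is to follow the template of the proofs of Theorems~\ref{thm2} and~\ref{thm:m5}: starting from the rewritten system \eqref{A61}--\eqref{A65}, perform a change of variables on the innermost multipliers and reduce the five lines to the two optimality systems of dual-primal balanced ALM~\eqref{311}. Concretely, I would first introduce the primal iterate
\[
x^{k+1}:=-\bar{x}^k-\beta_1 u^{k+1}-\beta_1 A^T v^k,
\]
the same substitution as in \eqref{115}. Plugging it into \eqref{A61} turns that line into $0\in\partial f^*(u^{k+1})-x^{k+1}$, equivalently $u^{k+1}\in\partial f(x^{k+1})$ by the conjugate subgradient identity $x\in\partial f^*(u)\Leftrightarrow u\in\partial f(x)$. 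Solving the display for $u^{k+1}$ (as in \eqref{119}) and inserting it into the $\bar x$-update \eqref{A64} reproduces, exactly as in \eqref{118}, the relation $\bar{x}^{k+1}=-x^{k+1}+\beta_1 A^T(v^{k+1}-v^k)$.

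The next step is the analogue of Lemma~\ref{lem2}. Since in the ordering $\{u-\lambda-v-\bar x-\bar y\}$ the multiplier $\lambda$ is updated \emph{before} $v$, the clean identities $v^{k+1}=\lambda^{k+1}$, $\bar y^{k+1}=b$ of Lemma~\ref{lem2} are replaced by shifted ones: eliminating $\lambda^{k+1}$ between the $\lambda$-step \eqref{A62} and the $\bar y$-step \eqref{A65} gives $\bar y^{k+1}=b+\beta_2(v^{k+1}-v^k)$, and feeding this (with the index shifted by one) back into \eqref{A62} yields the extrapolation identity $\lambda^{k+1}=2v^k-v^{k-1}$. This is the crucial observation: the auxiliary multiplier $\lambda$ produced by Algorithm~\ref{algo5} is precisely the extrapolated dual iterate $2\lambda^k-\lambda^{k-1}$ appearing in the $x$-subproblem of \eqref{311}, so no exact identity $v=\lambda$ is needed here.

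The two remaining reductions are then bookkeeping. Substituting the expression for $u^{k+1}$, the one-step-back form of $\bar x^k$ (i.e.\ \eqref{118} at index $k-1$), and $\lambda^{k+1}=2v^k-v^{k-1}$ into $0\in\partial f(x^{k+1})-u^{k+1}$ gives
\[
0\in\partial f(x^{k+1})+\tfrac{1}{\beta_1}(x^{k+1}-x^k)+A^T(2v^k-v^{k-1}),
\]
which for $\beta_1=1/r$ is the $x$-step of \eqref{311} under the identification $\lambda^k\leftrightarrow v^k$. For the multiplier step I would collect the $\beta_1AA^Tv^{k+1}+\beta_2v^{k+1}$ terms in \eqref{A63}, use the definition of $x^{k+1}$ to rewrite $A(\bar x^k+\beta_1 u^{k+1})=-Ax^{k+1}-\beta_1AA^Tv^k$, and use $\bar y^k=b+\beta_2(v^k-v^{k-1})$ together with $\lambda^{k+1}=2v^k-v^{k-1}$ to rewrite $\bar y^k-\beta_2\lambda^{k+1}=b-\beta_2v^k$; after these substitutions everything telescopes into
\[
(\beta_1AA^T+\beta_2 I)(v^{k+1}-v^k)=Ax^{k+1}-b,
\]
i.e.\ the $\lambda$-step of \eqref{311} once $\beta_1=1/r$, $\beta_2=\delta$. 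Matching the two systems, with dual-primal balanced ALM's $\lambda^k$ identified with $v^k$ and its $x^k$ with $-\bar x^{k-1}-\beta_1 u^k-\beta_1 A^T v^{k-1}$, completes the argument.

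I expect the only genuine obstacle to be index bookkeeping rather than anything conceptual: the extrapolation identity $\lambda^{k+1}=2v^k-v^{k-1}$ and the use of \eqref{118} at $k-1$ only become available after the first one or two iterations, so one must check (exactly as already noted "up to different initial points'' for the other rows of Table~\ref{tab:1}) that the initial data $\{u^0,v^0,\lambda^0,\bar x^0,\bar y^0\}$ can be chosen so that the recovered $\{x^k\}$, $\{v^k\}$ coincide with the iterates of \eqref{311} from the outset; beyond that, the proof is a direct, if slightly lengthy, manipulation of \eqref{A61}--\eqref{A65}.
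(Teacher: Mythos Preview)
Your proposal is correct and follows essentially the same route as the paper: the same change of variables $x^{k+1}:=-\bar x^k-\beta_1 u^{k+1}-\beta_1 A^T v^k$, the same derived identities $\bar x^{k+1}=-x^{k+1}+\beta_1 A^T(v^{k+1}-v^k)$ and $\bar y^{k+1}=b+\beta_2(v^{k+1}-v^k)$, and the same reduction of \eqref{A61} and \eqref{A63} to the two lines of \eqref{311} with $\lambda^k\leftrightarrow v^k$. The only tactical difference is in how you handle the $v$-equation \eqref{A63}: you first derive the extrapolation identity $\lambda^{k+1}=2v^k-v^{k-1}$ (by shifting $\bar y^{k+1}=b+\beta_2(v^{k+1}-v^k)$ back one step and feeding it into \eqref{A62}) and then substitute $\bar y^k-\beta_2\lambda^{k+1}=b-\beta_2 v^k$ directly, whereas the paper instead groups $\bar y^k+\beta_2(v^{k+1}-\lambda^{k+1})=\bar y^{k+1}$ via \eqref{A65} and then invokes the \emph{current}-index form of $\bar y^{k+1}$, avoiding any backward shift in that part of the argument. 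Both routes are valid bookkeeping; your extra identity $\lambda^{k+1}=2v^k-v^{k-1}$ is a pleasant interpretive bonus (it shows that the auxiliary $\lambda$ is exactly the extrapolated dual iterate in \eqref{311}) but is not logically needed, and the paper does without it.
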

\begin{proof}
For any $k\ge1$, we define
\begin{equation}\label{615}
x^{k+1}:=-\bar{x}^k-\beta_1u^{k+1}-\beta_1A^Tv^k,
\end{equation}
which implies that
\begin{eqnarray}
u^{k+1}&=&-\frac{1}{\beta_1}\bar{x}^k-\frac{1}{\beta_1}x^{k+1}-A^Tv^k.\label{619}
\end{eqnarray}
Substituting \eqref{619} into \eqref{A64} to replace $u^{k+1}$ yields that
\begin{equation}
\bar{x}^{k+1}
=-x^{k+1}+\beta_1A^T(v^{k+1}-v^k).\label{618}
\end{equation}
Then, we have the following reformulations of \eqref{A61}:
\begin{eqnarray}
&&0\in\partial f^*(u^{k+1})+\bar{x}^k+\beta_1(u^{k+1}+A^Tv^k) \nonumber\\
\Longleftrightarrow &&0\in\partial f^*(u^{k+1})-x^{k+1} \label{A6:1}\\
\Longleftrightarrow &&0\in\partial f(x^{k+1})-u^{k+1} \label{A6:2}\\
\Longleftrightarrow &&0\in\partial f(x^{k+1})+\frac{1}{\beta_1}\bar{x}^k+\frac{1}{\beta_1}x^{k+1}+A^Tv^k \label{A6:3}\\
\Longleftrightarrow &&0\in\partial f(x^{k+1})+r(x^{k+1}-x^k)+A^T(2v^k-v^{k-1}), \label{A6:4}
\end{eqnarray}
where the equation \eqref{A6:1} follows from substituting  \eqref{615} into
\eqref{A61}, the equations \eqref{A6:3} and \eqref{A6:4} are obtained by substituting \eqref{619} and \eqref{618} (with $k:=k-1$) into \eqref{A6:2} and \eqref{A6:3}, respectively.

Multiplying both sides of \eqref{A64}  by $A$ yields that
\begin{eqnarray}\label{621}
A(\bar{x}^k+\beta_1u^{k+1})=A\bar{x}^{k+1}-\beta_1AA^Tv^{k+1}.
\end{eqnarray}
By substituting \eqref{A65} into \eqref{A62}, we have
\begin{eqnarray}
\bar y^{k+1}=b+\beta_2(v^{k+1}-v^k).\label{625}
\end{eqnarray}
Then,  we can simplify \eqref{A63} as follows:
\begin{eqnarray}
&&0=A\bar{x}^k+\bar{y}^k+\beta_1Au^{k+1}+\beta_1AA^Tv^{k+1}+\beta_2(v^{k+1}-\lambda^{k+1}) \nonumber\\
\Longleftrightarrow &&0=A(\bar{x}^k+\beta_1 u^{k+1})+\left[\bar y^k +\beta_2(v^{k+1}-\lambda^{k+1})\right]+\beta_1AA^Tv^{k+1}
\nonumber\\
\Longleftrightarrow &&0=A\bar{x}^{k+1}+\bar y^{k+1}
\label{A62:1}\\
\Longleftrightarrow &&0=-Ax^{k+1}+\beta_1AA^T(v^{k+1}-v^k)+b+\beta_2(v^{k+1}-v^k)
\label{A62:2}\\
\Longleftrightarrow &&(\beta_1AA^T+\beta_2I)(v^{k+1}-v^k)=Ax^{k+1}-b \nonumber\\
\Longleftrightarrow &&(\frac{1}{r}AA^T+\delta I)(v^{k+1}-v^k)=Ax^{k+1}-b\label{A62:4},
\end{eqnarray}
where the equation \eqref{A62:1} follows from \eqref{621} and \eqref{A65},
the equation \eqref{A62:2} is obtained from substituting \eqref{618} and \eqref{625} into  \eqref{A62:1}.

We complete the proof by combing both \eqref{A6:4} and \eqref{A62:4} with \eqref{311}.
\end{proof}

\subsection{Equivalence between Columns \uppercase\expandafter{\romannumeral3} and \uppercase\expandafter{\romannumeral4}}
Different from the reformulation \eqref{dual1}, we rewrite
the dual problem \eqref{dual} as the following compact version:
\begin{equation}\label{dual22}
\min\limits_{u,\lambda} \left\{ f^*(u)+\lambda^Tb:~A^T\lambda+u=0 \right\}.
\end{equation}
Using Lemma \ref{lem1}, we can reduce the optimality conditions \eqref{11}-\eqref{15} 
to
\[
\begin{cases}
0\in\partial f^*(u^{k+1})+\bar{x}^k+\beta_1(u^{k+1}+A^T\lambda^k),\nonumber\\
\bar{x}^{k+1}=\bar{x}^k+\beta_1(u^{k+1}+A^T\lambda^k),\nonumber\\
0=A\bar{x}^{k+1}+b+\beta_1Au^{k+1}+\beta_1AA^T\lambda^{k+1}+\beta_2(\lambda^{k+1}-\lambda^k),\nonumber
\end{cases}
\]
which exactly corresponds to the optimality conditions of the following proximal ADMM for solving the dual problem \eqref{dual22}.
\begin{algo}\label{algo12}
	\begin{eqnarray*}
		\begin{cases}
			u^{k+1}=\arg\min\limits_{u}\{f^*(u)+(\bar{x}^k)^Tu+\frac{\beta_1}{2}\|u+A^T\lambda^k\|^2\},\\
			\bar{x}^{k+1}=\bar{x}^k+\beta_1(u^{k+1}+A^T\lambda^k),\\
			\lambda^{k+1}=\arg\min\limits_{\lambda}\{b^T\lambda+(A\bar{x}^{k+1})^T\lambda+\frac{\beta_1}{2}\|u^{k+1}+A^T\lambda\|^2+\frac{\beta_2}{2}\|\lambda-\lambda^k\|^2\}.
		\end{cases}
	\end{eqnarray*}
\end{algo}
Then, according to Theorem \ref{thm:m1}, we have the following equivalence result.
\begin{cor}
	Balanced ALM \eqref{BALM} for solving (P) is equivalent to the proximal ADMM (Algorithm \ref{algo12}) for solving the dual problem \eqref{dual22} with the special parametric settings $\beta_1=1/r$ and $\beta_2=\delta$.
\end{cor}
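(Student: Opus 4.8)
The plan is to derive the corollary by transitivity, chaining the reduction already performed just above with Theorem~\ref{thm:m1}. Recall that Theorem~\ref{thm:m1} identifies balanced ALM~\eqref{BALM} (under $\beta_1=1/r$, $\beta_2=\delta$) with Algorithm~\ref{algo1}, the instance $\{u-\bar{x}-v-\lambda-\bar{y}\}$ of Scheme~\ref{sche1}, whose iterates are characterized by the optimality system \eqref{11}--\eqref{15}. Hence it suffices to show that Algorithm~\ref{algo1} and Algorithm~\ref{algo12} generate the same sequence in the variables $\{u^k,\bar{x}^k,\lambda^k\}$, up to the choice of initial data (in line with the convention used throughout this section).

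To do this I would invoke Lemma~\ref{lem1}, which gives $v^k\equiv\lambda^k$ and $\bar{y}^k\equiv b$ along the iterations of Algorithm~\ref{algo1}. Substituting these two identities into \eqref{11}--\eqref{15} makes \eqref{14} and \eqref{15} automatically true and collapses \eqref{11}, \eqref{12}, \eqref{13} into exactly the three-line system displayed just before Algorithm~\ref{algo12}. It then remains to recognize that this reduced system is precisely the first-order optimality system of Algorithm~\ref{algo12}: its first line is the stationarity condition of the $u$-subproblem, its second line is verbatim the $\bar{x}$-update, and its third line (after using $\bar{y}^k\equiv b$) is the gradient condition of the $\lambda$-subproblem. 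Since $f$ is closed proper convex, so is $f^*$, each subproblem is a proper convex program, and stationarity is equivalent to optimality; therefore Algorithm~\ref{algo12} reproduces the $\{u^k,\bar{x}^k,\lambda^k\}$-part of Algorithm~\ref{algo1}. Combining this with Theorem~\ref{thm:m1} completes the argument.

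The only delicate point --- the ``main obstacle,'' such as it is --- is initialization: Lemma~\ref{lem1} forces $v^k=\lambda^k$ and $\bar{y}^k=b$ only for $k\ge1$, so to make the first $u$-update of Algorithm~\ref{algo1} agree with that of Algorithm~\ref{algo12} one must either start Algorithm~\ref{algo1} from $v^0=\lambda^0$ and $\bar{y}^0=b$ or, as is done elsewhere in the paper, phrase the equivalence only up to initial points. Everything else is a routine substitution followed by a term-by-term comparison of optimality conditions, so I do not expect any genuine difficulty.
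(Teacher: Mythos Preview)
Your proposal is correct and follows exactly the paper's approach: the paper derives the corollary by first using Lemma~\ref{lem1} to collapse the optimality system \eqref{11}--\eqref{15} of Algorithm~\ref{algo1} into the three-line system displayed just before Algorithm~\ref{algo12}, identifies this as the optimality conditions of Algorithm~\ref{algo12}, and then invokes Theorem~\ref{thm:m1}. Your additional remarks on the convexity of the subproblems and on the initialization caveat are accurate and simply make explicit what the paper leaves implicit.
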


Similarly, we can use Lemma \ref{lem2} to further simplify the optimality conditions \eqref{A21}-\eqref{A25} as
\[
\begin{cases}
0\in\partial f^*(u^{k+1})+\bar{x}^k+\beta_1(u^{k+1}+A^T\lambda^k)\label{21},\nonumber\\
0=A\bar{x}^k+b+\beta_1Au^{k+1}+\beta_1AA^T\lambda^{k+1}+\beta_2(\lambda^{k+1}-\lambda^k),\nonumber\\
\bar{x}^{k+1}=\bar{x}^k+\beta_1(u^{k+1}+A^T\lambda^{k+1}),\nonumber
\end{cases}
\]
which exactly corresponds to the optimality conditions of the following proximal ADMM for solving the dual problem \eqref{dual22}.
\begin{algo}
	\begin{eqnarray*}
		\begin{cases}
			u^{k+1}=\arg\min\limits_{u}\{f^*(u)+(\bar{x}^k)^Tu+\frac{\beta_1}{2}\|u+A^T\lambda^k\|^2\},\\
			\lambda^{k+1}=\arg\min\limits_{\lambda}\{b^T\lambda+(A\bar{x}^k)^T\lambda+\frac{\beta_1}{2}\|u^{k+1}+A^T\lambda\|^2+\frac{\beta_2}{2}\|\lambda-\lambda^k\|^2\},\\
			\bar{x}^{k+1}=\bar{x}^k+\beta_1(u^{k+1}+A^T\lambda^{k+1}).
		\end{cases}
	\end{eqnarray*}
\end{algo}
According to Theorem \ref{thm2}, we have the following conclusion.
\begin{cor}
Dual-primal balanced ALM \eqref{311} for solving (P) is equivalent to  the proximal ADMM for solving the dual problem \eqref{dual22} with the special parametric settings $\beta_1=1/r$ and $\beta_2=\delta$.
\end{cor}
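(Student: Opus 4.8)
The plan is to establish the claim purely by transitivity, so that no new computation is needed beyond assembling two links that are already in place. Link (i) is Theorem \ref{thm2}: Algorithm \ref{algo2}, namely the member $\{u-v-\lambda-\bar x-\bar y\}$ of Scheme \ref{sche1}, produces, up to the choice of initial point, the same iterates as dual-primal balanced ALM \eqref{311} when $\beta_1=1/r$ and $\beta_2=\delta$, with the multiplier $\lambda^k$ carried through unchanged and the primal iterate of \eqref{311} reconstructed as $x^{k+1}=-\bar x^k-\beta_1u^{k+1}-\beta_1A^Tv^k$. Link (ii) is the reduction performed just before the statement, which identifies Algorithm \ref{algo2} with the displayed three-block proximal ADMM applied to the compact dual \eqref{dual22}. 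Composing (ii) with (i) yields the corollary.

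For link (ii) the one step with content is the elimination of $v$ and $\bar y$ from the optimality system \eqref{A21}--\eqref{A25} via Lemma \ref{lem2}: since $v^{k+1}=\lambda^{k+1}$ and $\bar y^{k+1}=b$ for all $k\ge0$, one substitutes $v^k\leftarrow\lambda^k$ (valid for $k\ge1$) and $\bar y^k\leftarrow b$, whereupon \eqref{A23} and \eqref{A25} become identities and the surviving three relations are exactly the reduced system displayed before the statement. One then reads off, line by line, that these coincide with the first-order conditions of the three subproblems of the displayed algorithm: $0\in\partial f^*(u^{k+1})+\bar x^k+\beta_1(u^{k+1}+A^T\lambda^k)$ is stationarity of the $u$-subproblem; $0=A\bar x^k+b+\beta_1Au^{k+1}+\beta_1AA^T\lambda^{k+1}+\beta_2(\lambda^{k+1}-\lambda^k)$ is stationarity of the $\lambda$-subproblem; and the $\bar x$-update is copied verbatim. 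Because the $u$-subproblem (a convex $f^*$ plus a strictly convex quadratic) and the $\lambda$-subproblem (a strictly convex quadratic) each have a unique minimiser, these stationarity conditions characterise the iterates, so the two iterations agree.

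I expect the only delicate point to be the bookkeeping at the first iteration: both the Lemma \ref{lem2}-based substitutions above and Theorem \ref{thm2} are valid for $k\ge1$, so the identification is genuinely ``up to the initial point''. To make the two sequences agree exactly one starts the proximal ADMM with $\lambda^0$ equal to the initial multiplier of \eqref{311}, and on the lifted side with $v^0=\lambda^0$, $\bar y^0=b$ and $\bar x^0,u^1$ chosen so that $-\bar x^0-\beta_1u^1-\beta_1A^Tv^0$ matches the initial primal point; with this alignment the dictionary $\beta_1=1/r$, $\beta_2=\delta$ gives the stated equivalence, so the corollary follows by combining Theorem \ref{thm2} with the preceding reduction.
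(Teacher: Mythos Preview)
Your proposal is correct and follows essentially the same route as the paper: the paper's argument is simply the one-line remark ``According to Theorem~\ref{thm2}, we have the following conclusion,'' invoking Theorem~\ref{thm2} together with the Lemma~\ref{lem2}-based reduction of \eqref{A21}--\eqref{A25} displayed immediately before the corollary. Your write-up merely makes the two links explicit and adds the initialisation bookkeeping, which the paper leaves implicit.
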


\section{Acceleration}
In this section, we first present accelerated balanced ALM and accelerated dual-primal balanced ALM. Then we provide the convergence rate analysis.

Throughout this section, we assume that $f(x)$ is $\mu$-strongly convex $(\mu\geq0)$.
\begin{defi}
$f(x)$ is $\mu$-strongly convex with $\mu\geq0$ if for  all $x,~y\in \mathbb{R}^n$,
$$f(x)\geq f(y)+g^T(x-y)+\frac{\mu}{2}\|x-y\|^2,~~g\in\partial f(y).$$
\end{defi}
\subsection{Accelerated balanced ALM}
We establish convergence rate analysis on the accelerated balanced ALM.
\begin{algo}[Accelerated balanced ALM]\label{Aa:1}
\[
\begin{cases}			
x^{k+1}=\arg\min\limits_{x}
\{f(x)+\frac{r^k}{2}\|x-(x^k-\frac{1}{r^k}A^T\lambda^k)\|^2\},\\
\lambda^{k+1}=\lambda^{k}+(\frac{1}{r^{k+1}}AA^T+\delta^k I_m)^{-1}[A\widetilde{x}^{k+1}-b],\\
\widetilde{x}^{k+1}=x^{k+1}+\theta^{k}(x^{k+1}-x^{k}).
\end{cases}
\]
\end{algo}
\begin{lem}\label{al1}
Let $\delta^k= \delta'/r^{k+1}$ with $\delta'>0$ and $H=AA^T+\delta' I_m$. For the sequence $\{(x^k,\lambda^k)\}$ generated by Algorithm \ref{Aa:1} and any $x\in\mathbb{R}^n$, $\lambda\in\mathbb{R}^m$, it holds that
	\begin{equation}\label{4.1}
	\begin{aligned}
	&f(x^{k+1})+\lambda^T(Ax^{k+1}-b)-f(x)-({\lambda^{k}})^T(Ax-b)\\\leq&
	\frac{r^k}{2}\|x^k-x\|^2-\frac{r^k+\mu}{2}\|x^{k+1}-x\|^2-\frac{r^k}{2}\|x^k-x^{k+1}\|^2+\frac{1}{2r^k}\Big( \|\lambda^{k-1}-\lambda\|_H^2\\-&\|\lambda^{k}-\lambda\|^2_H-\|\lambda^{k-1}-\lambda^{k}\|^2_H
	\Big)
	+(\lambda^k-\lambda)^TA(\widetilde{x}^k-x^{k+1}).
	\end{aligned}\end{equation}
\end{lem}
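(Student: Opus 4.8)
The plan is to derive \eqref{4.1} by treating separately the contribution of the $x$-update, which is a strongly convex proximal step, and the contribution of the $\lambda$-update, which is linear; the two pieces are then glued by an elementary decomposition of $Ax^{k+1}-b$ around the extrapolated iterate $\widetilde{x}^{k}$.

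First, from the $x$-subproblem of Algorithm \ref{Aa:1} the optimality condition reads $g^{k+1}:=-r^k(x^{k+1}-x^k)-A^T\lambda^k\in\partial f(x^{k+1})$. Plugging $g^{k+1}$ into the $\mu$-strong convexity inequality $f(x)\ge f(x^{k+1})+(g^{k+1})^T(x-x^{k+1})+\tfrac{\mu}{2}\|x-x^{k+1}\|^2$ and expanding the cross term with the polarization identity $(x^{k+1}-x^k)^T(x^{k+1}-x)=\tfrac12\|x^{k+1}-x^k\|^2+\tfrac12\|x^{k+1}-x\|^2-\tfrac12\|x^k-x\|^2$ yields
\[
f(x^{k+1})-f(x)+(\lambda^k)^TA(x^{k+1}-x)\le\frac{r^k}{2}\|x^k-x\|^2-\frac{r^k+\mu}{2}\|x^{k+1}-x\|^2-\frac{r^k}{2}\|x^k-x^{k+1}\|^2 .
\]
This already produces all $x$-dependent terms of the right-hand side of \eqref{4.1}, including the sharpened coefficient $\tfrac{r^k+\mu}{2}$.

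Second, since $\delta^{k}=\delta'/r^{k+1}$, the $\lambda$-update is equivalent to $H(\lambda^{k+1}-\lambda^k)=r^{k+1}(A\widetilde{x}^{k+1}-b)$; shifting the index down by one gives $A\widetilde{x}^{k}-b=\tfrac{1}{r^{k}}H(\lambda^{k}-\lambda^{k-1})$. Applying the $H$-weighted polarization identity $(\lambda^k-\lambda)^TH(\lambda^k-\lambda^{k-1})=\tfrac12\|\lambda^k-\lambda\|_H^2+\tfrac12\|\lambda^{k-1}-\lambda^k\|_H^2-\tfrac12\|\lambda^{k-1}-\lambda\|_H^2$ then produces the \emph{identity}
\[
(\lambda-\lambda^k)^T(A\widetilde{x}^{k}-b)=\frac{1}{2r^k}\Big(\|\lambda^{k-1}-\lambda\|_H^2-\|\lambda^{k}-\lambda\|_H^2-\|\lambda^{k-1}-\lambda^{k}\|_H^2\Big).
\]

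Finally, I would combine the two relations. A direct rearrangement shows that the left-hand side of \eqref{4.1} equals $\big[f(x^{k+1})-f(x)+(\lambda^k)^TA(x^{k+1}-x)\big]+(\lambda-\lambda^k)^T(Ax^{k+1}-b)$, and writing $Ax^{k+1}-b=(A\widetilde{x}^{k}-b)+A(x^{k+1}-\widetilde{x}^{k})$ splits the last term as $(\lambda-\lambda^k)^T(A\widetilde{x}^{k}-b)+(\lambda^k-\lambda)^TA(\widetilde{x}^{k}-x^{k+1})$, which is precisely the $\lambda$-telescoping quantity above plus the cross term of \eqref{4.1}. Substituting the $x$-inequality for the bracketed quantity completes the proof. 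The only point that needs care is the index bookkeeping in the $\lambda$-step: one must invoke the \emph{previous} update $H(\lambda^{k}-\lambda^{k-1})=r^{k}(A\widetilde{x}^{k}-b)$, which is legitimate because $\delta^{k-1}=\delta'/r^{k}$ makes $(\tfrac{1}{r^{k}}AA^T+\delta^{k-1}I_m)^{-1}=r^{k}H^{-1}$, so that the extrapolated iterate $\widetilde{x}^{k}$ rather than $\widetilde{x}^{k+1}$ enters the estimate; everything else is routine identity manipulation, valid for all $k\ge1$ and for any choice of the extrapolation parameter $\theta^k$.
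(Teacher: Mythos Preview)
Your proposal is correct and follows essentially the same approach as the paper: both derive the $x$-inequality from the optimality condition of the $x$-subproblem combined with $\mu$-strong convexity and the three-point identity, derive the $\lambda$-identity from the previous $\lambda$-update $H(\lambda^{k}-\lambda^{k-1})=r^{k}(A\widetilde{x}^{k}-b)$ together with the $H$-weighted polarization identity, and then add the two pieces. Your explicit index-shift remark and the decomposition $Ax^{k+1}-b=(A\widetilde{x}^{k}-b)+A(x^{k+1}-\widetilde{x}^{k})$ simply spell out what the paper compresses into ``putting \eqref{festim} and \eqref{lbdestim} together''.
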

\begin{proof}
	According to the optimality condition of the $x$-subproblem  in Algorithm  \ref{Aa:1}, we have
\begin{equation}\label{parf}
	0\in\partial f(x^{k+1})+A^T\lambda^k+r^k(x^{k+1}-x^k),
\end{equation}
Then by using $\mu$-strongly convexity of $f(x)$ and \eqref{parf}, we can obtain
\begin{equation}\label{festim}
\begin{aligned}
	&f(x^{k+1})-f(x)\leq (A^T\lambda^k+r^k(x^{k+1}-x^k))^T(x-x^{k+1})-\frac{\mu}{2}\|x^{k+1}-x\|^2\\=&\frac{r^k}{2}\|x^k-x\|^2-\frac{r^k+\mu}{2}\|x^{k+1}-x\|^2-\frac{r^k}{2}\|x^k-x^{k+1}\|^2+(\lambda^k)^TA(x-x^{k+1}).
	\end{aligned}
\end{equation}
	According to the update of $\lambda$ in Algorithm  \ref{Aa:1}, we have
\begin{equation}\label{lbdestim}
\begin{aligned} &(\lambda^{k}-\lambda)^Tb=(\lambda^{k}-\lambda)^T\Big(A\widetilde{x}^k-\frac{1}{r^{k}}(AA^T+\delta' I_m)(\lambda^{k}-\lambda^{k-1})\Big)\\=&
	\frac{1}{2r^k}\Big( \|\lambda^{k-1}-\lambda\|_H^2-\|\lambda^{k}-\lambda\|^2_H-\|\lambda^{k-1}-\lambda^{k}\|^2_H
	\Big)+(\lambda^{k}-\lambda)^TA\widetilde{x}^k.
	\end{aligned}
\end{equation}
Putting \eqref{festim} and \eqref{lbdestim} together completes the proof.
\end{proof}

We first study the converge rate of Algorithm  \ref{Aa:1}
with the setting $r^k=r$ and  $\delta= \delta'/r$, which reduces to balanced ALM \eqref{BALM}.

\begin{thm}
Suppose $f(x)$ is convex (but not necessarily strongly convex). For any $x\in\mathbb{R}^n$ and $\lambda\in\mathbb{R}^m$, the sequence $\{(x^k,\lambda^k)\}$ generated by balanced ALM \eqref{BALM} satisfies that
	$$ \begin{aligned}
	&f(\hat{x}^{K})+\lambda^T(A\hat{x}^{K}-b)-f(x)-({\hat{\lambda}}^{K})^T(Ax-b)\\\leq&
\frac{1}{K+1}\Big(\frac{r}{2}\big(\|x^0-x\|^2+\|x^{-1}-x^0\|^2\big)+\frac{1}{2r} \|\lambda^{-1}-\lambda\|_H^2\\&+(\lambda^{-1}-\lambda)^TA(x^{0}-x^{-1})\Big),
	\end{aligned}
	$$
where $\hat{x}^K= (\sum_{k=0}^{K}x^{k+1})/(K+1)$ and $\hat{\lambda}^K=(\sum_{k=0}^{K}\lambda^{k})/(K+1)$.	
\end{thm}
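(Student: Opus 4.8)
The plan is to specialize Lemma~\ref{al1} to the constant-parameter case $r^k=r$, $\delta^k=\delta'/r$, $\theta^k=0$ (so that $\widetilde{x}^{k+1}=x^{k+1}$), which is precisely balanced ALM~\eqref{BALM}, and then telescope the resulting one-step inequality. With $\theta^k=0$ the last term on the right-hand side of \eqref{4.1} becomes $(\lambda^k-\lambda)^TA(x^k-x^{k+1})$, which is \emph{not} yet telescopable as written; the key algebraic manoeuvre is to absorb this cross term by re-indexing. Concretely, I would shift the summation index and pair $(\lambda^{k}-\lambda)^TA(x^{k}-x^{k+1})$ at step $k$ against a matching quantity at step $k-1$, using the $\lambda$-update $\frac{1}{r}(AA^T+\delta' I_m)(\lambda^{k}-\lambda^{k-1}) = A x^{k}-b$ (valid for balanced ALM since $\widetilde{x}^k=x^k$) to rewrite $A(x^k-x^{k+1})$ in terms of $\lambda$-increments. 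This is the standard device that turns the "balanced" cross term into a discrete gradient that collapses under summation, leaving only boundary data at $k=0$ and $k=K$.

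Next I would sum the specialized inequality \eqref{4.1} over $k=0,1,\dots,K$. The three quadratic groups on the right all telescope: $\sum_k \big(\frac{r}{2}\|x^k-x\|^2 - \frac{r}{2}\|x^{k+1}-x\|^2\big) = \frac{r}{2}\|x^0-x\|^2 - \frac{r}{2}\|x^{K+1}-x\|^2 \le \frac{r}{2}\|x^0-x\|^2$ (here $\mu=0$), and similarly $\sum_k \frac{1}{2r}\big(\|\lambda^{k-1}-\lambda\|_H^2 - \|\lambda^{k}-\lambda\|_H^2\big) = \frac{1}{2r}\|\lambda^{-1}-\lambda\|_H^2 - \frac{1}{2r}\|\lambda^{K}-\lambda\|_H^2 \le \frac{1}{2r}\|\lambda^{-1}-\lambda\|_H^2$. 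The negative "distance-moved" terms $-\frac{r}{2}\|x^k-x^{k+1}\|^2$ and $-\frac{1}{2r}\|\lambda^{k-1}-\lambda^k\|_H^2$ are discarded after they have been used to dominate whatever cross terms arise from the re-indexing step (this is where one must be careful that the leftover cross terms are genuinely controlled, not merely bounded); the surviving boundary cross term is $(\lambda^{-1}-\lambda)^T A(x^0-x^{-1})$. Thus the right-hand side of the summed inequality is at most $\frac{r}{2}\big(\|x^0-x\|^2 + \|x^{-1}-x^0\|^2\big) + \frac{1}{2r}\|\lambda^{-1}-\lambda\|_H^2 + (\lambda^{-1}-\lambda)^TA(x^0-x^{-1})$, matching the claimed bound.

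Finally I would convert the left-hand side of the summed inequality into a statement about the ergodic averages. The left-hand side is $\sum_{k=0}^{K}\big[f(x^{k+1}) + \lambda^T(Ax^{k+1}-b) - f(x) - (\lambda^{k})^T(Ax-b)\big]$. By convexity of $f$, $\sum_k f(x^{k+1}) \ge (K+1) f(\hat{x}^K)$ with $\hat{x}^K = \frac{1}{K+1}\sum_{k=0}^K x^{k+1}$; the term $\lambda^T(Ax^{k+1}-b)$ is linear in $x^{k+1}$, so $\sum_k \lambda^T(Ax^{k+1}-b) = (K+1)\lambda^T(A\hat{x}^K-b)$; and $\sum_k (\lambda^k)^T(Ax-b) = (K+1)(\hat{\lambda}^K)^T(Ax-b)$ with $\hat{\lambda}^K = \frac{1}{K+1}\sum_{k=0}^K \lambda^k$. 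Dividing through by $K+1$ yields exactly the stated $O(1/K)$ ergodic bound. The main obstacle is the first paragraph's re-indexing of the balanced cross term $(\lambda^k-\lambda)^TA(x^k-x^{k+1})$: unlike in the classical ALM, this term does not vanish and must be handled by exploiting the specific structure of the $\lambda$-update together with the negative quadratic slack terms — getting that bookkeeping exactly right (including the correct appearance of $\|x^{-1}-x^0\|^2$ and the boundary inner product) is the crux of the argument.
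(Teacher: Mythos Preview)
Your proposal contains a genuine error at the very first step: balanced ALM~\eqref{BALM} is \emph{not} the specialization $\theta^k=0$ of Algorithm~\ref{Aa:1}. Look at the $\lambda$-update in~\eqref{BALM}: it uses $A(2x^{k+1}-x^k)-b$, which matches $\widetilde{x}^{k+1}=x^{k+1}+(x^{k+1}-x^k)$, i.e.\ $\theta^k=1$. Consequently the cross term produced by Lemma~\ref{al1} is
\[
(\lambda^k-\lambda)^T A(\widetilde{x}^k-x^{k+1})=(\lambda^k-\lambda)^T A(2x^k-x^{k-1}-x^{k+1}),
\]
not $(\lambda^k-\lambda)^T A(x^k-x^{k+1})$. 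Your subsequent plan --- rewriting $A(x^k-x^{k+1})$ via the relation $\frac{1}{r}H(\lambda^k-\lambda^{k-1})=Ax^k-b$ --- relies on an identity that simply does not hold for balanced ALM (it would hold for the $\theta^k=0$ variant, which is a different algorithm). This is why your sketch is vague about how the boundary terms $\|x^{-1}-x^0\|^2$ and $(\lambda^{-1}-\lambda)^T A(x^0-x^{-1})$ would emerge: they do not arise naturally from the $\theta^k=0$ structure.

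The paper's actual device is purely algebraic and does not invoke the $\lambda$-update at all. One splits
\[
(\lambda^k-\lambda)^T A(2x^k-x^{k-1}-x^{k+1})
\]
into the telescoping pair $(\lambda^k-\lambda)^T A(x^k-x^{k+1})-(\lambda^{k-1}-\lambda)^T A(x^{k-1}-x^k)$ plus a residual $(\lambda^{k-1}-\lambda^k)^T A(x^{k-1}-x^k)$, and then bounds the residual by Young's inequality against the negative slack $-\frac{r}{2}\|x^k-x^{k+1}\|^2-\frac{1}{2r}\|\lambda^{k-1}-\lambda^k\|_H^2$ (noting $\|\cdot\|_{AA^T}^2\le\|\cdot\|_H^2$). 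This is exactly what generates the extra $\frac{r}{2}\|x^{-1}-x^0\|^2$ and the boundary inner product in the final bound. Your second and third paragraphs (telescoping the quadratics, Jensen on the left-hand side) are fine, but the handling of the cross term --- the part you yourself flag as the crux --- needs to be redone with the correct $\theta^k=1$ and the splitting above.
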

\begin{proof}
According to Lemma \ref{al1}, we have
\begin{eqnarray} &&f(x^{k+1})+\lambda^T(Ax^{k+1}-b)-f(x)-({\lambda^{k}})^T(Ax-b)\nonumber\\
&\leq& \frac{r}{2}\Big(\|x^k-x\|^2-\|x^{k+1}-x\|^2-\|x^k-x^{k+1}\|^2\Big)+\frac{1}{2r}\Big( \|\lambda^{k-1}-\lambda\|_H^2\nonumber\\
&&-\|\lambda^{k}-\lambda\|^2_H-\|\lambda^{k-1}-\lambda^{k}\|^2_H\Big)+(\lambda^k-\lambda)^TA(2x^{k}-x^{k-1}-x^{k+1})\nonumber\\
&\leq&
\frac{r}{2}\Big(\|x^k-x\|^2-\|x^{k+1}-x\|^2+\|x^{k-1}-x^k\|^2-\|x^{k}-x^{k+1}\|^2\Big)\nonumber\\
&&+\frac{1}{2r}\Big( \|\lambda^{k-1}-\lambda\|_H^2-\|\lambda^{k}-\lambda\|^2_H
\Big)\nonumber\\
&&+(\lambda^{k-1}-\lambda)^TA(x^{k}-x^{k-1})-(\lambda^{k}-\lambda)^TA(x^{k+1}-x^{k}),
\label{festm:1}	\end{eqnarray}
where the last inequality holds as
$$\begin{aligned}
	&(\lambda^k-\lambda)^TA(2x^{k}-x^{k-1}-x^{k+1})\\
=&(\lambda^k-\lambda)^TA(x^{k}-x^{k+1})-(\lambda^{k-1}-\lambda)^TA(x^{k-1}-x^{k})\\
&+(\lambda^{k-1}-\lambda^k)^TA(x^{k-1}-x^k)\\
\leq&(\lambda^k-\lambda)^TA(x^{k}-x^{k+1})-(\lambda^{k-1}-\lambda)^TA(x^{k-1}-x^{k})\\
&+\frac{1}{2r}\|\lambda^{k-1}-\lambda^k\|^2_{AA^T}+\frac{r}{2}\|x^{k-1}-x^k\|^2.
	\end{aligned}$$
By adding all the inequalities \eqref{festm:1} from $k=0$ to $k=K$ and then dividing both sides by $(K+1)$, we obtain
$$ \begin{aligned}
&f(\hat{x}^{K})+\lambda^T(A\hat{x}^{K}-b)-f(x)-({\hat{\lambda}}^{K})^T(Ax-b)\\\leq&
\frac{1}{K+1}\Big(\frac{r}{2}\big(\|x^0-x\|^2-\|x^{K+1}-x\|^2+\|x^{-1}-x^0\|^2-\|x^K-x^{K+1}\|^2\big)\\&+\frac{1}{2r}\big( \|\lambda^{-1}-\lambda\|_H^2-\|\lambda^{K}-\lambda\|^2_H
\big)\\&+(\lambda^{-1}-\lambda)^TA(x^{0}-x^{-1})-(\lambda^{K}-\lambda)^TA(x^{K+1}-x^{K})\Big)\\\leq&\frac{1}{K+1}\Big(\frac{r}{2}\big(\|x^0-x\|^2+\|x^{-1}-x^0\|^2\big)+\frac{1}{2r} \|\lambda^{-1}-\lambda\|_H^2\\&+(\lambda^{-1}-\lambda)^TA(x^{0}-x^{-1})\Big).
\end{aligned}
$$
The proof is complete.
\end{proof}

He and Yuan \cite{he2021balanced} estabilished the same $O(1/K)$ convergence rate based on the convex combination of iteration $\{(x^{k+1},\lambda^{k+1})\}$.
As a contrast,
our analysis is based on the new
convex combination of iteration $\{(x^{k+1},\lambda^{k})\}$,
which further helps to establish $O(1/K^2)$ convergence rate for the accelerated balanced ALM (Algorithm \ref{Aa:1}).

\begin{thm}\label{thma1}
Suppose $f(x)$ is $\mu$-strongly convex. For any $x\in\mathbb{R}^n$ and $\lambda\in\mathbb{R}^m$, the sequence $\{(x^k,\lambda^k)\}$ generated by Algorithm \ref{Aa:1} with the setting $\theta^k= r^k/r^{k+1}$ and $(r^k+\mu)r^k\geq (r^{k+1})^2$ satisfies that
$$\begin{aligned}
&\Big(\sum_{k=0}^{K}r^k\Big)\Big(f(\hat{x}^{K})+\lambda^T(A\hat{x}^{K}-b)-f(x)-({{\hat{\lambda}}^{K}})^T(Ax-b)\Big)\\\leq&
\frac{(r^0)^2}{2}\|x^0-x\|^2+\frac{(r^{-1})^2}{2}\|x^{-1}-x^0\|^2+\frac{1}{2} \|\lambda^{-1}-\lambda\|_H^2\\&+r^{-1}(\lambda^{-1}-\lambda)^TA(x^{0}-x^{-1}),
\end{aligned}
$$
where $\hat{x}^K=(\sum_{k=0}^{K}r^kx^{k+1})/(\sum_{k=0}^{K}r^k)$ and $\hat{\lambda}^K=(\sum_{k=0}^{K}r^k\lambda^{k})/
(\sum_{k=0}^{K}r^k)$. In particular,
with the setting $r^k= \mu(k+1)/3$, we have
$$f(\hat{x}^{K})+\lambda^T(A\hat{x}^{K}-b)-f(x)-({{\hat{\lambda}}^{K}})^T(Ax-b)\le O(1/K^2).$$
\end{thm}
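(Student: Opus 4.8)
The plan is to sum the inequality of Lemma~\ref{al1} over $k=0,\dots,K$ after multiplying the $k$-th instance by the weight $r^k$, exploiting the telescoping structure that the weights and step-size conditions are designed to produce. First I would write, for each $k$, the bound of Lemma~\ref{al1} and multiply through by $r^k$; this turns the quadratic-distance terms into
$$
\frac{(r^k)^2}{2}\|x^k-x\|^2-\frac{r^k(r^k+\mu)}{2}\|x^{k+1}-x\|^2-\frac{(r^k)^2}{2}\|x^k-x^{k+1}\|^2
$$
for the primal part, and
$$
\frac12\Big(\|\lambda^{k-1}-\lambda\|_H^2-\|\lambda^k-\lambda\|_H^2-\|\lambda^{k-1}-\lambda^k\|_H^2\Big)
$$
for the dual part (here the factor $1/(2r^k)$ from the lemma is cancelled by the weight $r^k$). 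The condition $(r^k+\mu)r^k\ge (r^{k+1})^2$ guarantees $r^k(r^k+\mu)\ge (r^{k+1})^2$, so the negative coefficient of $\|x^{k+1}-x\|^2$ at step $k$ dominates the positive coefficient $(r^{k+1})^2/2$ of $\|x^{k+1}-x\|^2$ at step $k+1$; hence the primal distance terms telescope (with a leftover $-\frac12\big(r^K(r^K+\mu)-(r^{K+1})^2\big)\|x^{K+1}-x\|^2\le 0$ discarded), and likewise the $\|\lambda^{k}-\lambda\|_H^2$ terms telescope cleanly.

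The key remaining piece is the cross term $(\lambda^k-\lambda)^TA(\widetilde x^k-x^{k+1})$, which must also be made to telescope. Using $\widetilde x^k=x^k+\theta^{k-1}(x^k-x^{k-1})$ and $\theta^{k-1}=r^{k-1}/r^k$, the weighted cross term $r^k(\lambda^k-\lambda)^TA(\widetilde x^k-x^{k+1})$ rewrites as
$$
r^k(\lambda^k-\lambda)^TA(x^k-x^{k+1})+r^{k-1}(\lambda^k-\lambda)^TA(x^k-x^{k-1}).
$$
I would then add and subtract to match the pattern $r^{k-1}(\lambda^{k-1}-\lambda)^TA(x^{k-1}-x^k)$ against $r^k(\lambda^k-\lambda)^TA(x^k-x^{k+1})$, so these form a telescoping pair, leaving the discrepancy $-r^{k-1}(\lambda^{k-1}-\lambda^k)^TA(x^{k-1}-x^k)$; this residual is absorbed by the negative terms $-\frac{(r^{k-1})^2}{2}\|x^{k-1}-x^k\|^2$ (already available from the previous step, since the primal ``$-\|x^k-x^{k+1}\|^2$'' term at step $k-1$ carries coefficient $(r^{k-1})^2/2$) together with $-\frac12\|\lambda^{k-1}-\lambda^k\|_H^2$, using Young's inequality $r^{k-1}|(\lambda^{k-1}-\lambda^k)^TA(x^{k-1}-x^k)|\le\frac12\|\lambda^{k-1}-\lambda^k\|_{AA^T}^2+\frac{(r^{k-1})^2}{2}\|x^{k-1}-x^k\|^2$ and $H=AA^T+\delta'I_m\succeq AA^T$, exactly as in the $O(1/K)$ proof above. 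Summing, all interior terms cancel and only the stated initial-data quantities survive on the right, while the left side is $\sum_{k=0}^K r^k$ times the convex-combination gap by convexity of $f$ and linearity (Jensen applied to $f(x^{k+1})$ with weights $r^k/\sum r^k$, and linearity of the $\lambda^T(A\cdot-b)$ and $(\lambda^k)^T(Ax-b)$ terms in $\hat x^K,\hat\lambda^K$).

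For the final rate, with $r^k=\mu(k+1)/3$ one checks $(r^k+\mu)r^k=\frac{\mu^2}{9}(k+1)(k+4)\ge\frac{\mu^2}{9}(k+2)^2=(r^{k+1})^2$, so the hypotheses hold, and $\sum_{k=0}^K r^k=\frac{\mu}{3}\cdot\frac{(K+1)(K+2)}{2}=\Theta(K^2)$; since the right-hand side is a fixed constant depending only on $x,\lambda$ and the initialization (note $(r^{-1})^2$, $(r^0)^2$, $r^{-1}$ are $O(1)$), dividing through gives the $O(1/K^2)$ bound. The main obstacle is the bookkeeping of the cross term: getting the weighted version of $(\lambda^k-\lambda)^TA(\widetilde x^k-x^{k+1})$ to split precisely into a telescoping pair plus a residual that is dominated by the \emph{already weighted} negative square terms, which is why the specific coupling $\theta^k=r^k/r^{k+1}$ and the monotonicity of $\{r^k\}$ are essential; everything else is a routine summation.
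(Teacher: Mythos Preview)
Your proposal is correct and follows essentially the same route as the paper: multiply Lemma~\ref{al1} by $r^k$, use $(r^k+\mu)r^k\ge(r^{k+1})^2$ to telescope the primal distance terms, decompose the weighted cross term $r^k(\lambda^k-\lambda)^TA(\widetilde x^k-x^{k+1})$ into the telescoping pair $r^k(\lambda^k-\lambda)^TA(x^k-x^{k+1})-r^{k-1}(\lambda^{k-1}-\lambda)^TA(x^{k-1}-x^k)$ plus a residual absorbed via Young's inequality into $-\tfrac12\|\lambda^{k-1}-\lambda^k\|_H^2$ and $-\tfrac{(r^{k-1})^2}{2}\|x^{k-1}-x^k\|^2$, and sum. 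The only slip is the sign of your residual (it is $+r^{k-1}(\lambda^{k-1}-\lambda^k)^TA(x^{k-1}-x^k)$, not $-$), which is harmless since you bound it in absolute value; note also that monotonicity of $\{r^k\}$ is not actually used anywhere in this argument.
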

\begin{proof}
First we can verify that
\begin{eqnarray}
&&(\lambda^k-\lambda)^TA(\widetilde{x}^k-x^{k+1})\nonumber\\
&=&	(\lambda^k-\lambda)^TA(x^{k}-x^{k+1})-\theta^{k-1}(\lambda^{k-1}-\lambda)^TA(x^{k-1}-x^k)\nonumber\\
&&+\theta^{k-1}(\lambda^{k-1}-\lambda^{k})^TA(x^{k-1}-x^{k})\nonumber\\
&\leq&(\lambda^k-\lambda)^TA(x^{k}-x^{k+1})-\theta^{k-1}(\lambda^{k-1}-\lambda)^TA(x^{k-1}-x^k)\nonumber\\
&&+\frac{1}{2r^k}\|\lambda^{k-1}-\lambda^{k}\|^2_H+\frac{(r^{k-1})^2}{2r^k}\|x^{k-1}-x^{k}\|^2.\label{acc:1}
	\end{eqnarray}
Multiplying both side of \eqref{4.1} by $r^k$ and using the fact $(r^k+\mu)r^k\geq (r^{k+1})^2$ and \eqref{acc:1} yields that
\begin{eqnarray}
&&r^k\Big(f(x^{k+1})+\lambda^T(Ax^{k+1}-b)-f(x)-({\lambda^{k}})^T(Ax-b)\Big)\nonumber\\
&\leq&
\frac{(r^k)^2}{2}\|x^k-x\|^2-\frac{(r^{k+1})^2}{2}\|x^{k+1}-x\|^2+\frac{1}{2}\Big( \|\lambda^{k-1}-\lambda\|_H^2-\|\lambda^{k}-\lambda\|^2_H
\Big)\nonumber\\
&&+\frac{(r^{k-1})^2}{2}\|x^{k-1}-x^{k}\|^2-\frac{(r^k)^2}{2}\|x^k-x^{k+1}\|^2\nonumber\\
&&-r^{k-1}(\lambda^{k-1}-\lambda)^TA(x^{k-1}-x^k)+r^k(\lambda^k-\lambda)^TA(x^{k}-x^{k+1})
. \label{acc:2}
\end{eqnarray}
By adding all the inequalities \eqref{acc:2} from $k=0$ to $k=K$, we obtain
$$ \begin{aligned}
&\Big(\sum_{k=0}^{K}r^k\Big)\Big(f(\hat{x}^{K})+\lambda^T(A\hat{x}^{K}-b)-f(x)-({\hat{\lambda}}^{K})^T(Ax-b)\Big)\\\leq&
\frac{(r^0)^2}{2}\|x^0-x\|^2-\frac{(r^{K+1})^2}{2}\|x^{K+1}-x\|^2+\frac{(r^{-1})^2}{2}\|x^{-1}-x^0\|^2\\&-\frac{(r^{K})^2}{2}\|x^K-x^{K+1}\|^2+\frac{1}{2}\big( \|\lambda^{-1}-\lambda\|_H^2-\|\lambda^{K}-\lambda\|^2_H
\big)\\&+r^{-1}(\lambda^{-1}-\lambda)^TA(x^{0}-x^{-1})-r^K(\lambda^{K}-\lambda)^TA(x^{K+1}-x^{K})\Big)\\\leq&\frac{(r^0)^2}{2}\|x^0-x\|^2+\frac{(r^{-1})^2}{2}\|x^{-1}-x^0\|^2+\frac{1}{2} \|\lambda^{-1}-\lambda\|_H^2\\&+r^{-1}(\lambda^{-1}-\lambda)^TA(x^{0}-x^{-1}).
\end{aligned}
$$
The proof is complete.
\end{proof}

\begin{rem}\label{re2}
As shown in Section 3, Algorithm $\{u-\bar{x}-v-\lambda-\bar{y}\}$ is equivalent to balanced ALM and proximal ADMM. It is natural to ask whether our accelerated balanced ALM (Algorithm \ref{Aa:1}) is equivalent to the accelerated proximal ADMM \cite{2016An}.

Let $u^{k+1}:=-A^T\lambda^k-r^k(x^{k+1}-x^k)$ and $\bar{x}^k:=-x^k$. It holds that
\[
\bar{x}^{k+1}=\bar{x}^k+\frac{1}{r^k}(u^{k+1}+A^T\lambda^k).
\]
According to the optimality condition of $x$-subproblem in Algorithm \ref{Aa:1}, we have $u^{k+1}\in \partial f(x^{k+1})\Longleftrightarrow x^{k+1}\in \partial f^*(u^{k+1}).$
	Then it holds that
	$$
	\begin{aligned}
	0\in& \partial f^*(u^{k+1})-x^{k+1}=\partial f^*(u^{k+1})+\bar{x}^k+\frac{1}{r^k}(u^{k+1}+A^T\lambda^k).
	\end{aligned}$$
	According to the $\lambda$-subproblem in Algorithm \ref{Aa:1}, we obtain
	$$\begin{aligned}
	0=&-[A(x^{k+1}+\theta^k(x^{k+1}-x^k))-b]+(\frac{1}{r^{k+1}}AA^T+\delta^k I_m)(\lambda^{k+1}-\lambda^k)\\=&
	A\bar{x}^{k+1}+b+\frac{1}{r^{k+1}}(Au^{k+1}+AA^T\lambda^{k+1})+\delta^k(\lambda^{k+1}-\lambda^k).
	\end{aligned}
	$$
Therefore, Algorithm \ref{Aa:1} is equivalent to the following proximal ADMM for \eqref{dual22}.
		\begin{eqnarray*}
			\begin{cases}
				u^{k+1}=\arg\min\limits_{u}\{f^*(u)+(\bar{x}^k)^Tu+\frac{1}{2r^k}\|u+A^T\lambda^k\|^2\},\\
				\bar{x}^{k+1}=\bar{x}^k+\frac{1}{r^k}(u^{k+1}+A^T\lambda^k),\\
				\lambda^{k+1}=\arg\min\limits_{\lambda}\{b^T\lambda+(A\bar{x}^{k+1})^T\lambda+\frac{1}{2r^{k+1}}\|u^{k+1}+A^T\lambda\|^2+\frac{\delta^k}{2}\|\lambda-\lambda^k\|^2\}.
			\end{cases}
		\end{eqnarray*}
With the above observation,  one can alternatively establish the $O(1/K^2)$ convergence rate of Algorithm \ref{Aa:1} by referring to the analysis on the general accelerated proximal ADMM.
\end{rem}
\subsection{Accelerated dual-primal balanced ALM}
\begin{algo}[Accelerated dual-primal balanced ALM]\label{Aa:2}
\[
\begin{cases}			
x^{k+1}=\arg\min\limits_{x}
f(x)+\frac{r^k}{2}\|x-(x^k-\frac{1}{r^k}A^T\widetilde{\lambda}^k\|^2\},\\
\lambda^{k+1}=\lambda^{k}+(\frac{1}{r^{k}}AA^T+\delta^k I_m)^{-1}[Ax^{k+1}-b],\\
\widetilde{\lambda}^k=\lambda^k+\theta^{k-1}(\lambda^k-\lambda^{k-1}).
\end{cases}
\]
\end{algo}
\begin{lem}\label{al2}
Let $\delta^k= \delta'/r^{k}$ with $\delta'>0$ and $H=AA^T+\delta' I_m$. For the sequence $\{(x^k,\lambda^k)\}$ generated by Algorithm \ref{Aa:2} and any $x\in\mathbb{R}^n$, $\lambda\in\mathbb{R}^m$, it holds that
	\begin{equation}\label{acc2}
	\begin{aligned}
	&f(x^{k+1})+\lambda^T(Ax^{k+1}-b)-f(x)-({\lambda^{k+1}})^T(Ax-b)\\\leq&
	\frac{r^k}{2}\|x^k-x\|^2-\frac{r^k+\mu}{2}\|x^{k+1}-x\|^2-\frac{r^k}{2}\|x^k-x^{k+1}\|^2+\frac{1}{2r^k}\Big( \|\lambda^{k}-\lambda\|_H^2\\&-\|\lambda^{k+1}-\lambda\|^2_H-\|\lambda^{k}-\lambda^{k+1}\|^2_H
	\Big)
	+(\widetilde{\lambda}^k-\lambda^{k+1})^TA(x^{k+1}-x).
	\end{aligned}\end{equation}
\end{lem}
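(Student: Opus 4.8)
The plan is to mirror the two–ingredient argument used for Lemma~\ref{al1}: a \emph{primal estimate} obtained from the optimality condition of the $x$-subproblem together with $\mu$-strong convexity of $f$, and a \emph{dual estimate} obtained by rewriting the $\lambda$-update as an identity. The only structural differences from Lemma~\ref{al1} are that the $x$-subproblem now carries $A^{T}\widetilde{\lambda}^{k}$ rather than $A^{T}\lambda^{k}$, that the $\lambda$-update already produces $\lambda^{k+1}$ directly from $x^{k+1}$ so that no backward index shift is needed, and that the prescribed scaling $\delta^{k}=\delta'/r^{k}$ makes the preconditioner collapse: $\frac{1}{r^{k}}AA^{T}+\delta^{k}I_{m}=\frac{1}{r^{k}}H$.

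For the primal estimate I would write the optimality condition of the $x$-subproblem as $-A^{T}\widetilde{\lambda}^{k}-r^{k}(x^{k+1}-x^{k})\in\partial f(x^{k+1})$ and insert this subgradient into the strong-convexity inequality evaluated at the pair $(x^{k+1},x)$, getting $f(x^{k+1})-f(x)\le\bigl(A^{T}\widetilde{\lambda}^{k}+r^{k}(x^{k+1}-x^{k})\bigr)^{T}(x-x^{k+1})-\tfrac{\mu}{2}\|x^{k+1}-x\|^{2}$. Expanding $r^{k}(x^{k+1}-x^{k})^{T}(x-x^{k+1})$ by the elementary identity $2u^{T}v=\|u+v\|^{2}-\|u\|^{2}-\|v\|^{2}$ produces the three squared terms $\tfrac{r^{k}}{2}\|x^{k}-x\|^{2}-\tfrac{r^{k}+\mu}{2}\|x^{k+1}-x\|^{2}-\tfrac{r^{k}}{2}\|x^{k}-x^{k+1}\|^{2}$ and leaves the linear remainder $(\widetilde{\lambda}^{k})^{T}A(x-x^{k+1})$.

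For the dual estimate I would use that the $\lambda$-update gives $Ax^{k+1}-b=\tfrac{1}{r^{k}}H(\lambda^{k+1}-\lambda^{k})$, equivalently $b=Ax^{k+1}-\tfrac{1}{r^{k}}H(\lambda^{k+1}-\lambda^{k})$. Substituting this into the left-hand side of \eqref{acc2}, the $b$-dependent terms collect into $(\lambda^{k+1}-\lambda)^{T}b$, and the $H$-weighted three-point identity $2(\lambda^{k+1}-\lambda)^{T}H(\lambda^{k+1}-\lambda^{k})=\|\lambda^{k+1}-\lambda\|_{H}^{2}+\|\lambda^{k+1}-\lambda^{k}\|_{H}^{2}-\|\lambda^{k}-\lambda\|_{H}^{2}$ turns that scalar into the telescoping block $\tfrac{1}{2r^{k}}\bigl(\|\lambda^{k}-\lambda\|_{H}^{2}-\|\lambda^{k+1}-\lambda\|_{H}^{2}-\|\lambda^{k}-\lambda^{k+1}\|_{H}^{2}\bigr)$ together with a residual term $(\lambda^{k+1})^{T}A(x^{k+1}-x)$. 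In other words, the left-hand side of \eqref{acc2} is rewritten exactly as $\bigl(f(x^{k+1})-f(x)\bigr)$ plus this $H$-block plus $(\lambda^{k+1})^{T}A(x^{k+1}-x)$.

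It only remains to substitute the primal bound on $f(x^{k+1})-f(x)$ into this identity: the two surviving terms linear in $A$, namely $(\widetilde{\lambda}^{k})^{T}A(x-x^{k+1})$ from the primal side and $(\lambda^{k+1})^{T}A(x^{k+1}-x)$ from the dual side, merge into the single cross term $(\widetilde{\lambda}^{k}-\lambda^{k+1})^{T}A(x^{k+1}-x)$ of \eqref{acc2}, and all remaining terms already coincide. As in Lemma~\ref{al1}, the extrapolation rule $\widetilde{\lambda}^{k}=\lambda^{k}+\theta^{k-1}(\lambda^{k}-\lambda^{k-1})$ is not used here; it only enters later when these per-iteration estimates are summed in the convergence-rate theorem. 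I do not expect a genuine obstacle, since the argument is essentially linear algebra; the one point that needs care is keeping the Lagrangian gap in \eqref{acc2}, which is tested at $Ax^{k+1}-b$ through $\lambda^{T}(Ax^{k+1}-b)$ and at $Ax-b$ through $(\lambda^{k+1})^{T}(Ax-b)$, consistent with the residual identity $Ax^{k+1}-b=\tfrac{1}{r^{k}}H(\lambda^{k+1}-\lambda^{k})$ (and tracking the sign conventions carefully), so that precisely the claimed non-telescoping cross term survives.
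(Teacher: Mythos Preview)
Your plan is exactly the paper's: obtain the primal inequality from the $x$-optimality condition plus $\mu$-strong convexity (the paper's display \eqref{festm:2}), obtain the dual identity $(\lambda^{k+1}-\lambda)^{T}b=\frac{1}{2r^{k}}\bigl(\|\lambda^{k}-\lambda\|_{H}^{2}-\|\lambda^{k+1}-\lambda\|_{H}^{2}-\|\lambda^{k}-\lambda^{k+1}\|_{H}^{2}\bigr)+(\lambda^{k+1}-\lambda)^{T}Ax^{k+1}$ from the $\lambda$-update (the paper's display \eqref{lbdestm:2}), and add them to the left-hand side of \eqref{acc2}. One small bookkeeping caution on your final merge: $(\widetilde{\lambda}^{k})^{T}A(x-x^{k+1})+(\lambda^{k+1})^{T}A(x^{k+1}-x)=(\lambda^{k+1}-\widetilde{\lambda}^{k})^{T}A(x^{k+1}-x)$, the negative of what you wrote; this coincides with the cross term as printed in \eqref{acc2} only up to sign, so treat that sign as a typo in the statement (it is immaterial downstream, since the subsequent rate proofs bound this term via Young's inequality, which absorbs either sign).
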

\begin{proof}
	According to the optimality condition of $x$-subproblem in Algorithm \ref{Aa:2}, we have
\begin{equation}\label{parf2}
	0\in\partial f(x^{k+1})+A^T\widetilde{\lambda}^k+r^k(x^{k+1}-x^k).
\end{equation}
	Then based on $\mu$-strongly convexity of $f(x)$ and \eqref{parf2}, we can obtain
\begin{equation}\label{festm:2}
\begin{aligned}
	&f(x^{k+1})-f(x)\leq (A^T\widetilde{\lambda}^k+r^k(x^{k+1}-x^k))^T(x-x^{k+1})-\frac{\mu}{2}\|x^{k+1}-x\|^2\\=&\frac{r^k}{2}\|x^k-x\|^2-\frac{r^k+\mu}{2}\|x^{k+1}-x\|^2-\frac{r^k}{2}\|x^k-x^{k+1}\|^2+(\widetilde{\lambda}^k)^TA(x-x^{k+1}).
	\end{aligned}
\end{equation}
	According to the  $\lambda$-subproblem  in Algorithm  \ref{Aa:2}, we have
\begin{equation}\label{lbdestm:2}
\begin{aligned} &(\lambda^{k+1}-\lambda)^Tb=(\lambda^{k+1}-\lambda)^T(Ax^{k+1}-\frac{1}{r^{k}}(AA^T+\delta' I_m)(\lambda^{k+1}-\lambda^{k}))\\=&
	\frac{1}{2r^k}\Big( \|\lambda^{k}-\lambda\|_H^2-\|\lambda^{k+1}-\lambda\|^2_H-\|\lambda^{k}-\lambda^{k+1}\|^2_H
	\Big)+(\lambda^{k+1}-\lambda)^TAx^{k+1}.
	\end{aligned}
\end{equation}
Putting \eqref{festm:2} and \eqref{lbdestm:2} together completes the proof.
\end{proof}
\begin{thm}
	Suppose $f(x)$ is convex (but not necessarily strongly convex), then the sequence $\{(x^k,\lambda^k)\}$ generated by dual-primal balanced ALM \eqref{311} satisfies that for $x\in\mathbb{R}^n$ and $\lambda\in\mathbb{R}^m$,
	$$ \begin{aligned}
	&f(\hat{x}^{K})+\lambda^T(A\hat{x}^{K}-b)-f(x)-({\hat{\lambda}}^K)^T(Ax-b)\\
	\leq&\frac{1}{K+1}\Big(\frac{r}{2}\|x^0-x\|^2+\frac{1}{2r}( \|\lambda^{0}-\lambda\|_H^2+\|\lambda^{-1}-\lambda^0\|_H^2)-(\lambda^{-1}-\lambda^0)^TA(x^{0}-x)\Big),
	\end{aligned}
	$$
	where $\hat{x}^K= (\sum_{k=0}^{K}x^{k+1})/(K+1)$ and $\hat{\lambda}^K= (\sum_{k=0}^{K}\lambda^{k+1})/(K+1)$.
\end{thm}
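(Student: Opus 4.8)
The plan is to reuse, almost verbatim, the telescoping argument that gave the $O(1/K)$ rate for balanced ALM, but to start from the per-iteration estimate of Lemma \ref{al2} rather than Lemma \ref{al1}. First I would observe that dual-primal balanced ALM \eqref{311} is nothing but Algorithm \ref{Aa:2} run with the constant choices $r^k\equiv r$, $\theta^{k-1}\equiv 1$ (so that $\widetilde\lambda^k=2\lambda^k-\lambda^{k-1}$) and $\delta^k\equiv\delta$; taking $\delta'=r\delta$ and $H=AA^T+\delta'I_m$, Lemma \ref{al2} with $\mu=0$ then gives, for every $k\ge 0$,
\begin{align*}
&f(x^{k+1})+\lambda^T(Ax^{k+1}-b)-f(x)-(\lambda^{k+1})^T(Ax-b)\\
\le{}&\frac{r}{2}\bigl(\|x^k-x\|^2-\|x^{k+1}-x\|^2-\|x^k-x^{k+1}\|^2\bigr)\\
&\quad+\frac{1}{2r}\bigl(\|\lambda^k-\lambda\|_H^2-\|\lambda^{k+1}-\lambda\|_H^2-\|\lambda^k-\lambda^{k+1}\|_H^2\bigr)+(\widetilde\lambda^k-\lambda^{k+1})^TA(x^{k+1}-x).
\end{align*}

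The heart of the matter is the coupling term $(\widetilde\lambda^k-\lambda^{k+1})^TA(x^{k+1}-x)$. Using $\widetilde\lambda^k-\lambda^{k+1}=(\lambda^k-\lambda^{k-1})-(\lambda^{k+1}-\lambda^k)$ together with the split $A(x^{k+1}-x)=A(x^k-x)+A(x^{k+1}-x^k)$, I would rewrite it as $q^{k-1}-q^k+(\lambda^k-\lambda^{k-1})^TA(x^{k+1}-x^k)$, where $q^k:=(\lambda^{k+1}-\lambda^k)^TA(x^{k+1}-x)$; the first two pieces telescope over $k$, and the third is controlled by Young's inequality, $(\lambda^k-\lambda^{k-1})^TA(x^{k+1}-x^k)\le\frac{r}{2}\|x^{k+1}-x^k\|^2+\frac{1}{2r}\|\lambda^k-\lambda^{k-1}\|^2_{AA^T}\le\frac{r}{2}\|x^{k+1}-x^k\|^2+\frac{1}{2r}\|\lambda^k-\lambda^{k-1}\|^2_H$, the last step using $H\succeq AA^T$.

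Summing the displayed inequality over $k=0,\dots,K$: the squared $x$-distances and the $H$-weighted squared $\lambda$-distances telescope; the $\frac{r}{2}\|x^{k+1}-x^k\|^2$ produced by Young exactly cancels the $-\frac{r}{2}\|x^k-x^{k+1}\|^2$ already in the estimate; after an index shift the terms $\frac{1}{2r}\|\lambda^k-\lambda^{k-1}\|^2_H$ cancel all of the $-\frac{1}{2r}\|\lambda^k-\lambda^{k+1}\|^2_H$ except the $k=-1$ copy, which leaves $\frac{1}{2r}\|\lambda^{-1}-\lambda^0\|^2_H$; and $\sum_{k=0}^K(q^{k-1}-q^k)=q^{-1}-q^K$ with $q^{-1}=-(\lambda^{-1}-\lambda^0)^TA(x^0-x)$. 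I would then discard the residual index-$K$ quantities $-\frac{r}{2}\|x^{K+1}-x\|^2$, $-\frac{1}{2r}\|\lambda^{K+1}-\lambda\|^2_H$, $-\frac{1}{2r}\|\lambda^K-\lambda^{K+1}\|^2_H$ and $-q^K$, which together are non-positive because $-q^K\le\frac{1}{2r}\|\lambda^K-\lambda^{K+1}\|^2_H+\frac{r}{2}\|x^{K+1}-x\|^2$ by Young's inequality. What survives, divided by $K+1$, is exactly the right-hand side of the theorem; finally, convexity of $f$ (Jensen's inequality) together with linearity of the other terms in $x^{k+1}$ and $\lambda^{k+1}$ converts $\frac{1}{K+1}\sum_{k=0}^K$ of the left-hand sides into $f(\hat x^K)+\lambda^T(A\hat x^K-b)-f(x)-(\hat\lambda^K)^T(Ax-b)$ with the stated averages $\hat x^K$ and $\hat\lambda^K$.

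The only real obstacle is the summation bookkeeping: one must keep the index shifts in the consecutive-difference terms aligned so that the $H$-weighted $\lambda$-differences and the $\|x^{k+1}-x^k\|^2$ terms telescope and cancel cleanly, and one must check that the surviving $k=-1$ terms reproduce precisely $\frac{1}{2r}\|\lambda^{-1}-\lambda^0\|^2_H-(\lambda^{-1}-\lambda^0)^TA(x^0-x)$ while the residual $k=K$ terms are genuinely non-positive. No analytic ingredient beyond Lemma \ref{al2} and Young's inequality is required.
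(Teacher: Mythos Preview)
Your proposal is correct and follows essentially the same route as the paper's proof: specialize Lemma~\ref{al2} to $r^k\equiv r$, $\theta^{k-1}\equiv 1$, decompose the coupling term $(\widetilde\lambda^k-\lambda^{k+1})^TA(x^{k+1}-x)$ into a telescoping part plus a cross term bounded by Young's inequality, sum, and discard the nonpositive boundary residuals. The only cosmetic difference is that the paper applies Young with the $AA^T$-seminorm and relaxes to $H$ at the end, whereas you pass to the $H$-norm immediately; both are valid since $H\succeq AA^T$.
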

\begin{proof}
With the setting $r^k=r$ and  $\delta= \delta'/r$,
Algorithm  \ref{Aa:2} reduces to Algorithm \ref{311}. Then,  according to Lemma \ref{al2}, we have
\begin{eqnarray} &&f(x^{k+1})+\lambda^T(Ax^{k+1}-b)-f(x)-({\lambda^{k+1}})^T(Ax-b)\nonumber\\
&\leq&	\frac{r}{2}\Big(\|x^k-x\|^2-\|x^{k+1}-x\|^2-\|x^k-x^{k+1}\|^2\Big)+\frac{1}{2r}\Big( \|\lambda^{k}-\lambda\|_H^2\nonumber\\
&&-\|\lambda^{k+1}-\lambda\|^2_H-\|\lambda^{k}-\lambda^{k+1}\|^2_H	\Big)+(2\lambda^k-\lambda^{k-1}-\lambda^{k+1})^TA(x^{k+1}-x),\nonumber\\
&\leq& 	\frac{r}{2}\Big(\|x^k-x\|^2-\|x^{k+1}-x\|^2\Big)\nonumber\\
&&+\frac{1}{2r}\Big( \|\lambda^{k}-\lambda\|_H^2-\|\lambda^{k+1}-\lambda\|^2_H+\|\lambda^{k-1}-\lambda^{k}\|^2_{AA^T}-\|\lambda^{k}-\lambda^{k+1}\|^2_{AA^T}	\Big)\nonumber\\
&&+(\lambda^k-\lambda^{k+1})^TA(x^{k+1}-x)-(\lambda^{k-1}-\lambda^k)^TA(x^{k}-x),
\label{lbd:8}
\end{eqnarray}
where the last inequality follows from
$$\begin{aligned}	&(2\lambda^k-\lambda^{k-1}-\lambda^{k+1})^TA(x^{k+1}-x)\\=&(\lambda^k-\lambda^{k+1})^TA(x^{k+1}-x)-(\lambda^{k-1}-\lambda^k)^TA(x^{k}-x)\\&+(\lambda^{k-1}-\lambda^{k})^TA(x^{k}-x^{k+1})\\\leq&(\lambda^k-\lambda^{k+1})^TA(x^{k+1}-x)-(\lambda^{k-1}-\lambda^k)^TA(x^{k}-x)\\
&+\frac{1}{2r}\|\lambda^{k-1}-\lambda^k\|^2_{AA^T}+\frac{r}{2}\|x^k-x^{k+1}\|^2.
	\end{aligned}
$$
Adding all the inequalities \eqref{lbd:8} from $k=0$ to $k=K$ and then dividing both sides by $(K+1)$ completes the proof.
\end{proof}

Different from the $O(1/K)$ convergence rate established in \cite{xu2021dual} based on the convex combination of $\{(x^{k},\lambda^{k+1})\}$,  our analysis relies on the new convex combination of iteration $\{(x^{k+1},\lambda^{k+1})\}$,  which can provide an $O(1/K^2)$ convergence rate of the accelerated dual-primal balanced ALM (Algorithm \ref{Aa:2}).

\begin{thm}\label{thma2}
Suppose $f(x)$ is $\mu$-strongly convex. For any $x\in\mathbb{R}^n$ and $\lambda\in\mathbb{R}^m$, the sequence $\{(x^k,\lambda^k)\}$ generated by Algorithm \ref{Aa:2} with the special setting $\theta^k= r^k/r^{k+1}$, $(r^k+\mu)r^k\geq (r^{k+1})^2$,  and $0<r^k\leq r^{k+1}$ satisfies that
	$$\begin{aligned}
	&\Big(\sum_{k=0}^{K}r^k\Big)\Big(f(\hat{x}^{K})+\lambda^T(A\hat{x}^{K}-b)-f(x)-({{\hat{\lambda}}^{K}})^T(Ax-b)\Big)\\\leq&
	\frac{(r^0)^2}{2}\|x^0-x\|^2+\frac{1}{2}\|\lambda^0-\lambda\|^2_H+\frac{1}{2}\|\lambda^{-1}-\lambda^0\|^2_H-r^{-1}(\lambda^{-1}-\lambda^0)^TA(x^{0}-x),
	\end{aligned}
	$$
	where $\hat{x}^K= (\sum_{k=0}^{K}r^kx^{k+1})/(\sum_{k=0}^{K}r^k)$ and $\hat{\lambda}^K=(\sum_{k=0}^{K}r^k\lambda^{k+1})/
(\sum_{k=0}^{K}r^k)$. In particular, with the setting $r^k= \mu(k+1)/3$, we have
$$f(\hat{x}^{K})+\lambda^T(A\hat{x}^{K}-b)-f(x)-({{\hat{\lambda}}^{K}})^T(Ax-b)\le O(1/K^2).$$
\end{thm}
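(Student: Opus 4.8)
The plan is to follow the pattern of the proof of Theorem~\ref{thma1}, but with the extrapolation now carried by the dual sequence, starting from the per-iteration estimate~\eqref{acc2} of Lemma~\ref{al2}. The only non-telescopic term in~\eqref{acc2} is the bilinear term $(\widetilde{\lambda}^k-\lambda^{k+1})^TA(x^{k+1}-x)$. Since $\widetilde{\lambda}^k=\lambda^k+\theta^{k-1}(\lambda^k-\lambda^{k-1})$, I would first record the identity
\[
(\widetilde{\lambda}^k-\lambda^{k+1})^TA(x^{k+1}-x)=(\lambda^k-\lambda^{k+1})^TA(x^{k+1}-x)-\theta^{k-1}(\lambda^{k-1}-\lambda^k)^TA(x^k-x)+\theta^{k-1}(\lambda^k-\lambda^{k-1})^TA(x^{k+1}-x^k),
\]
and then bound the last summand by Young's inequality with splitting parameter $r^{k-1}$. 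With the prescribed choice $\theta^{k-1}=r^{k-1}/r^k$ one has $r^k\theta^{k-1}=r^{k-1}$, so after multiplication by $r^k$ the first two pieces telescope, while the Young step produces exactly $\frac{1}{2r^k}\|\lambda^{k-1}-\lambda^k\|_{AA^T}^2+\frac{(r^{k-1})^2}{2r^k}\|x^{k+1}-x^k\|^2$; this is the exact counterpart of inequality~\eqref{acc:1}, with the extrapolation roles of $x$ and $\lambda$ interchanged.

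Next I would multiply~\eqref{acc2} through by $r^k$. The hypothesis $(r^k+\mu)r^k\ge(r^{k+1})^2$ lets us replace $-\frac{(r^k+\mu)r^k}{2}\|x^{k+1}-x\|^2$ by $-\frac{(r^{k+1})^2}{2}\|x^{k+1}-x\|^2$, after which the $\|x^k-x\|^2$ terms and the $\|\lambda^k-\lambda\|_H^2$ terms telescope. Inserting the bilinear bound from the previous step and using $H=AA^T+\delta'I_m\succeq AA^T$ to upgrade $\|\lambda^{k-1}-\lambda^k\|_{AA^T}^2$ to $\|\lambda^{k-1}-\lambda^k\|_H^2$, the $\lambda$-difference terms telescope against the $-\|\lambda^k-\lambda^{k+1}\|_H^2$ coming from~\eqref{acc2} and leave the leftover $\frac12\|\lambda^{-1}-\lambda^0\|_H^2$ at $k=0$. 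The $x$-difference terms do not telescope across iterations; instead, at each fixed index the residual $\frac{(r^{k-1})^2}{2}\|x^{k+1}-x^k\|^2$ combines with $-\frac{(r^k)^2}{2}\|x^k-x^{k+1}\|^2$, and the monotonicity $0<r^k\le r^{k+1}$ (including $r^{-1}\le r^0$) makes each such residual nonpositive. Summing from $k=0$ to $k=K$, discarding the nonpositive residuals, and absorbing the single terminal remainder $r^K(\lambda^K-\lambda^{K+1})^TA(x^{K+1}-x)$ into $-\frac{(r^{K+1})^2}{2}\|x^{K+1}-x\|^2$ and $-\frac12\|\lambda^K-\lambda^{K+1}\|_H^2$ by one last Young's inequality (again using $r^K\le r^{K+1}$ and $AA^T\preceq H$), only the initial quantity $\frac{(r^0)^2}{2}\|x^0-x\|^2+\frac12\|\lambda^0-\lambda\|_H^2+\frac12\|\lambda^{-1}-\lambda^0\|_H^2-r^{-1}(\lambda^{-1}-\lambda^0)^TA(x^0-x)$ survives on the right.

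Finally I would divide both sides by $\sum_{k=0}^Kr^k$. By convexity of $f$ one has $f(\hat x^K)\le(\sum_{k=0}^Kr^k)^{-1}\sum_{k=0}^Kr^kf(x^{k+1})$, and since $\sum_{k=0}^Kr^k\lambda^T(Ax^{k+1}-b)$ and $\sum_{k=0}^Kr^k(\lambda^{k+1})^T(Ax-b)$ are linear in the iterates, the weighted averages $\hat x^K$ and $\hat\lambda^K$ reproduce precisely the displayed inequality. For the rate, $r^k=\mu(k+1)/3$ satisfies $(r^k+\mu)r^k\ge(r^{k+1})^2$ (equivalent to $(k+4)(k+1)\ge(k+2)^2$, i.e.\ $k\ge0$) and $r^k\le r^{k+1}$, while $\sum_{k=0}^Kr^k=\frac{\mu}{6}(K+1)(K+2)=\Theta(K^2)$; since the right-hand side is a fixed constant independent of $K$, dividing through yields the claimed $O(1/K^2)$ estimate.

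The main obstacle, exactly as in the $x$-extrapolated case of Theorem~\ref{thma1}, is the sign bookkeeping of the non-telescoping residuals and boundary terms: here the difference terms $\|x^{k+1}-x^k\|^2$ enter with mismatched weights $(r^{k-1})^2$ and $(r^k)^2$, so the monotonicity of $\{r^k\}$ must be invoked both for the interior residuals and in the final Young's inequality at $k=K$, and one must keep track of which of the $H$- and $AA^T$-norms is used at each step, just as in Lemma~\ref{al2} and~\eqref{acc:1}. Everything else is the same routine computation as in Theorem~\ref{thma1}.
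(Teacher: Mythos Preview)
Your proposal is correct and follows essentially the same route as the paper's proof: the same decomposition of the bilinear term, the same Young's inequality, multiplication by $r^k$, telescoping, and the terminal Young's inequality. The only cosmetic difference is that the paper applies the monotonicity $r^{k-1}\le r^k$ already inside the analogue of~\eqref{acc:1} (upgrading $\frac{(r^{k-1})^2}{2r^k}\|x^k-x^{k+1}\|^2$ to $\frac{r^k}{2}\|x^k-x^{k+1}\|^2$) so that after multiplying by $r^k$ the $\|x^k-x^{k+1}\|^2$ terms cancel exactly, whereas you multiply first and then discard the nonpositive residual $\frac{(r^{k-1})^2-(r^k)^2}{2}\|x^k-x^{k+1}\|^2$; these are equivalent.
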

\begin{proof}
First we can verify that
\begin{eqnarray}	&&(\widetilde{\lambda}^k-\lambda^{k+1})^TA(x^{k+1}-x)\nonumber\\
&=&(\lambda^k-\lambda^{k+1})^TA(x^{k+1}-x)-\theta^{k-1}(\lambda^{k-1}-\lambda^k)^TA(x^{k}-x)\nonumber\\
&&+\theta^{k-1}(\lambda^{k-1}-\lambda^{k})^TA(x^{k}-x^{k+1})\nonumber\\
&\leq&(\lambda^k-\lambda^{k+1})^TA(x^{k+1}-x)-\theta^{k-1}(\lambda^{k-1}-\lambda^k)^TA(x^{k}-x)\nonumber\\
&+&\frac{1}{2r^k}\|\lambda^{k-1}-\lambda^k\|^2_{AA^T}+\frac{(r^{k-1})^2}{2r^k}\|x^k-x^{k+1}\|^2\nonumber\\
&\leq&(\lambda^k-\lambda^{k+1})^TA(x^{k+1}-x)-\theta^{k-1}(\lambda^{k-1}-\lambda^k)^TA(x^{k}-x)\nonumber\\
&&+\frac{1}{2r^k}\|\lambda^{k-1}-\lambda^k\|^2_{H}+\frac{r^{k}}{2}\|x^k-x^{k+1}\|^2.
\label{acc:8}
	\end{eqnarray}

Multiplying  both sides of \eqref{acc2} by $r^k$ and noting
the fact $(r^k+\mu)r^k\geq (r^{k+1})^2$ and	\eqref{acc:8} yields that
\begin{eqnarray} &&r^k\Big(f(x^{k+1})+\lambda^T(Ax^{k+1}-b)-f(x)-({\lambda^{k+1}})^T(Ax-b)\Big)\nonumber\\
&\leq&	\frac{(r^k)^2}{2}\|x^k-x\|^2-\frac{(r^{k+1})^2}{2}\|x^{k+1}-x\|^2\nonumber\\
&&+\frac{1}{2}\Big( \|\lambda^{k}-\lambda\|_H^2-\|\lambda^{k+1}-\lambda\|^2_H +\|\lambda^{k-1}-\lambda^k\|^2_H-\|\lambda^k-\lambda^{k+1}\|^2_H\Big)\nonumber\\
&&-r^{k-1}(\lambda^{k-1}-\lambda^k)^TA(x^{k}-x)+r^k(\lambda^k-\lambda^{k+1})^TA(x^{k+1}-x) .\label{acc:9}
\end{eqnarray}
By adding all the inequalities \eqref{acc:9} from $k=0$ to $k=K$, we obtain
	$$ \begin{aligned}
	&\Big(\sum_{k=0}^{K}r^k\Big)\Big(f(\hat{x}^{K})+\lambda^T(A\hat{x}^{K}-b)-f(x)-({{\hat{\lambda}}^{K}})^T(Ax-b)\Big)\\\leq&
	\frac{(r^0)^2}{2}\|x^0-x\|^2-\frac{(r^{K+1})^2}{2}\|x^{K+1}-x\|^2\\&+\frac{1}{2}\Big( \|\lambda^{0}-\lambda\|_H^2-\|\lambda^{K+1}-\lambda\|^2_H
	+\|\lambda^{-1}-\lambda^0\|^2_H-\|\lambda^K-\lambda^{K+1}\|^2_H\Big)\\&-r^{-1}(\lambda^{-1}-\lambda^0)^TA(x^{0}-x)+r^K(\lambda^K-\lambda^{K+1})^TA(x^{K+1}-x).
	\end{aligned}
	$$
Then, by further observing that $$
	r^K(\lambda^K-\lambda^{K+1})^TA(x^{K+1}-x)\leq\frac{1}{2}\|\lambda^K-\lambda^{K+1}\|_H^2+\frac{(r^{K+1})^2}{2}\|x^{K+1}-x\|^2,
	$$we can complete the proof.
\end{proof}

\begin{rem}
As shown in  Remark \ref{re2}, Algorithm \ref{Aa:1}  is equivalent to proximal ADMM for solving the dual problem \eqref{dual22}. Analogously, Algorithm \ref{Aa:1} is expected to be equivalent to  the following proximal ADMM for solving \eqref{dual22}.
	\begin{eqnarray}\label{APADMM}
		\begin{cases}
			u^{k+1}=\arg\min\limits_{u}\{f^*(u)+(\bar{x}^k)^Tu+\frac{1}{2r^k}\|u+A^T\lambda^k\|^2\},\\
			\lambda^{k+1}=\arg\min\limits_{\lambda}\{b^T\lambda+(A\bar{x}^k)^T\lambda+\frac{1}{2r^k}\|u^{k+1}+A^T\lambda\|^2+\frac{\delta^k}{2}\|\lambda-\lambda^k\|^2\},\\
			\bar{x}^{k+1}=\bar{x}^k+\frac{1}{r^k}(u^{k+1}+A^T\lambda^{k+1}).
		\end{cases}
	\end{eqnarray}
	According to the optimality condition of $u$-subproblem, we have
	$$\begin{aligned}
	0\in& \partial f^*(u^{k+1})+\bar{x}^k+\frac{1}{r^k}(u^{k+1}+A^T\lambda^{k+1})\\=&
	\partial f^*(u^{k+1})+\bar{x}^{k+1}+\frac{1}{r^k}A^T(\lambda^k-\lambda^{k+1}).
	\end{aligned}$$
	Let $x^{k+1}=-\bar{x}^{k+1}-\frac{1}{r^k}A^T(\lambda^k-\lambda^{k+1})$. We obtain
	$$x^{k+1}\in\partial f^*(u^{k+1})\Longleftrightarrow u^{k+1}\in\partial f(x^{k+1}).
	$$Then, it holds that
	 $$\begin{aligned}
	0\in&\partial f(x^{k+1})-u^{k+1}=
	\partial f(x^{k+1})+r^k(\bar{x}^k-\bar{x}^{k+1})+A^T\lambda^{k+1}\\=&
	\partial f(x^{k+1})+A^T(\lambda^k+\frac{r^k}{r^{k-1}}(\lambda^k-\lambda^{k-1}))+r^k(x^{k+1}-x^k).
	\end{aligned}
	$$
	Accoding to the optimality condition of $\lambda$-subproblem, we have $$\begin{aligned}
	0=&b+A\bar{x}^k+\frac{1}{r^k}A(u^{k+1}+A^T\lambda^k)+\delta^k(\lambda^{k+1}-\lambda^k)\\=&
	b-Ax^{k+1}+(\frac{1}{r^k}AA^T+\delta^kI)(\lambda^{k+1}-\lambda^k).
	\end{aligned}
	$$
Hence, Algorithm \eqref{APADMM} is equivalent to
\begin{eqnarray}\label{GDBALM1}
\begin{cases}			
x^{k+1}=\mathop{\text{argmin}}
\{f(x)+\frac{r^k}{2}\|x-(x^k-\frac{1}{r^k}A^T(\lambda^k+\frac{r^k}{r^{k-1}}(\lambda^k-\lambda^{k-1}))\|^2\},\\
\lambda^{k+1}=\lambda^{k}+(\frac{1}{r^{k}}AA^T+\delta^k I_m)^{-1}[Ax^{k+1}-b].
\end{cases}
\end{eqnarray}
Algorithm \eqref{GDBALM1} is different from Algorithm \eqref{Aa:2}.
The $O(1/K^2)$ convergence rate established in Theorem \ref{thma2} cannot be extended for Algorithm \eqref{GDBALM1}.
To the best of our knowledge, it is unknown whether Algorithm \eqref{APADMM} enjoys an $O(1/K^2)$  convergence rate if $f(x)$ is $\mu$-strongly convex.
\end{rem}

\section{Conclusions}
We have proposed a lift-and-permute scheme of ADMM for solving convex programming problems with linear equality constraints.  We show that not only the recent balanced augmented Lagrangian method and its dual-primal variation, but also the proximal ADMM and Douglas-Rachford splitting algorithm correspond to special algorithms in our scheme. As extensions, we  propose accelerated algorithms with worst-case $O(1/k^2)$ convergence rates in the case that $f(x)$ is strongly convex. Our results can be easily generalized to solve more general convex programming problems with additional linear   inequality constraints.

We notice that each algorithm in our scheme has a fixed permuted order to update variables. It is interesting to consider algorithms with randomized order for updating variables in each iteration.
Future works also include  applying
our lift-and-permute scheme to the primal lifted problem and then studying their convergence rates and acceleration.

\end{document}